\newcommand\norm[1]{\left\lVert#1\right\rVert}
\newtheorem{thm}{Theorem}[section]
\newtheorem{lemma}[thm]{Lemma}
\newtheorem{prop}[thm]{Proposition}
\theoremstyle{definition}
\newtheorem{definition}[thm]{Definition}
\theoremstyle{remark}
\numberwithin{equation}{section}
\begin{document}
	
	\title{Global regularity and fast small scale formation for Euler patch equation in a disk}
	
	\author{Chao Li\\
		Department of Mathematics\\
		Rice University}

	\begin{abstract}
		It is well known that the Euler vortex patch in $\mathbb{R}^{2}$ will remain regular if it is regular enough initially. In bounded domains, the regularity theory for patch solutions is less complete. We study here the Euler vortex patch in a disk. We prove global in time regularity by providing the upper bound of the growth of curvature of the patch boundary. For a special symmetric scenario, we construct an example of double exponential curvature growth, showing that such upper bound is qualitatively sharp.
	\end{abstract}

	\maketitle
	\pagestyle{plain}
	
	\section{Introduction}

    As we know, the Euler equation in the vorticity formulation in domain $D$ is given by 
	
	\begin{equation}\label{euler}
	\omega_{t} + (u\cdot\nabla)\omega = 0
	\end{equation}
	
	\begin{equation}\label{BSL}
	u = \nabla^{\perp}(-\Delta_{D})^{-1}\omega
	\end{equation}
	
	where $\nabla^{\perp}$ and $x^{\perp}$ denote $(\partial_{2},-\partial_{1})$ and $(x_{2},-x_{1})$ respectively, and $(-\Delta_{D})^{-1}\omega$ solves Laplace's equation with Dirichlet boundary condition $-\Delta\psi = \omega$ in $D$, with $\psi = 0$ on $\partial D$. Equation (\ref{BSL}) is called Biot-Savart law. \\
	
	Now we define particle trajectory associated to $u$ by 
	
    \begin{equation}\label{particletrajectory}
	\dfrac{d}{dt}\Phi_{t}(x) = u(\Phi_{t}(x),t), \quad \Phi_{0}(x) = x.
	\end{equation}
	
	Since the active scalar $\omega$ is transported by the velocity $u$ from (\ref{euler}), we have
	
	\begin{equation}\label{advection}
	\omega(x,t) = \omega_{0}(\Phi^{-1}_{t}(x)).
	\end{equation}
	
	The solution satisfying (\ref{BSL}), (\ref{particletrajectory}) and (\ref{advection}) is called a solution to Euler equation in Yudovich sense (see \cite{Yudovich}).
	
	An Euler vortex patch is a solution to the Euler equation in Yudovich sense of the form
	
	\begin{equation}
	  \omega(x,t) = \sum_{k = 1}^{N}\theta_{k}\chi_{\Omega_{k}(t)}(x).
	\end{equation}
	
	Here $\theta_{k}$ are some constants, and $\Omega_{k}(t)$ are (evolving in time) bounded open sets in $D$ with smooth (in some sense) boundaries, whose closure $\overline{\Omega_{k}(t)}$ are mutually disjoint.\\
	
	It is well known that in Yudovich sense, the solution to 
	2D Euler equation in $L^{\infty}\cap L^{1}$ exists and is unique (see \cite{Yudovich} or \cite{Majda} for a modern proof). In this paper, we study a stronger notion of regularity which refers to sufficient smoothness of the patch boundaries ($\partial\Omega_{k}$), as well as to the lack of both self-intersections of each patch boundary and touching of different patches.

	To be precise, we have the following series of definitions.
	
	\begin{definition}
		Let $\Omega \subseteq D$ be an open set whose boundary $\partial\Omega$ is a simple closed $C^{1}$ curve with arc-length $|\partial\Omega|$. A constant speed parametrization of $\partial\Omega$ is any counter-clockwise parametrization $z : \mathbb{T} \rightarrow \mathbb{R}^{2}$ of $\partial\Omega$ with $|z^{\prime}| = \frac{1}{2\pi}|\partial\Omega|$ on the circle $\mathbb{T} := [-\pi,\pi]$ (with $\pm\pi$ identified), and we define $\norm{\Omega}_{C^{m,\gamma}} :=  \norm{z}_{C^{m,\gamma}}$.
	\end{definition}
	
	\begin{definition}\label{generalpatchdef}
		Let $\theta_{1},...,\theta_{N} \in \mathbb{R}\setminus\{0\}$, and for each $t \in [0,T)$, let $\Omega_{1}(t),...,\Omega_{N}(t) \subseteq D$ be open sets with pairwise disjoint closures whose boundaries $\partial\Omega_{k}(t)$ are simple closed curves. Let
		\[
		\omega(x,t) := \sum\limits_{k=1}^{N}\theta_{k}\chi_{\Omega_{k}(t)}(x).
		\]
		Suppose $\omega$ also satisfies
		\[
		\omega(x,t) = \omega_{0}(\Phi^{-1}_{t}(x))
		\]
		where $\dfrac{d}{dt}\Phi_{t}(x) = u(\Phi_{t}(x),t)$, $\Phi_{0}(x) = x$, and $u$ is given by \ref{BSL}. Then $\omega$ is called a patch solution to (1.2)-(1.3) with initial data $\omega_{0}$ on the interval $[0,T)$. If in addition, we also have
		\[
		\sup\limits_{t \in [0,T^{\prime}]}\norm{\Omega_{k}(t)}_{C^{m,\gamma}} \, < \infty
		\]
		for each $k$ and $T^{\prime} \in (0,T)$, then $\omega$ is a $C^{m,\gamma}$ patch solution to (1.2) and (1.3) on $[0,T)$.
		
	\end{definition}
	
	\textbf{Remark} . In the above definition, the domains $\Omega_{k}(t)$ are allowed to touch $\partial D$ as long as $\partial\Omega_{k}(t)$ remains in $C^{m,\gamma}$.\\

	Singularity formation for two dimensional Euler vortex patches had been conjectured based on the numerical simulations by \cite{Buttke} for the first time (see \cite{AMajda} for a discussion). In 1993, Chemin \cite{Chemin} first proved that the boundary of a two dimensional Euler patch will remain regular for all time if it is regular enough ($C^{1,\gamma}$) initially, and the growth of its curvature is at most double exponential in time (see also the work by Bertozzi and Constantin in \cite{Constantin} for a different proof). For vortex patch in domains with boundaries, Depauw \cite{Depauw} has proved the global existence of a single $C^{1,\gamma}$ patch in the half plane when the patch does not touch the boundary initially. If the initial patch touches the boundary, then \cite{Depauw} proved that the regularity $C^{1,\gamma}$ will remain for a finite time. Sacrificing a little regularity, Dutrifoy \cite{Dutrifoy} proved that the global existence can be obtained in a weaker space $C^{1,s}$ for some $s \in (0,\gamma)$. Recently, Kiselev, Ryzhik, Yao and Zlato$\check{s}$ \cite{KRYZ} have proved the global regularity for two dimensional $C^{1,\gamma}$ Euler vortex patches (multiple) in half plane without loss of regularity.\\

	Our goal here is to explore Euler patch dynamics in bounded domains. We derive global upper bounds on the growth of curvature as well as construct examples showing sharpness of the upper bound in some instances. In this paper, we focus on the example of a disk, but we expect many techniques to extend the results to more general cases of domain with regular boundary. To avoid excessive technicalities, we leave this extension to future work.\\
	
    Throughout the paper (except Section 5), we denote the domain $D$ to be $B_{1}(0) = \{x \in \mathbb{R}^{2}: |x| < 1\}$.

	Here are our main results,

	\begin{thm}\label{singlepatch}
		Let $\gamma \in (0,1)$, then for each $C^{1,\gamma}$ single patch initial data $\omega_{0}$, there exists a unique global regular $C^{1,\gamma}$ patch solution $\omega$ to (1.1)-(1.2) with $\omega(\cdot,0) = \omega_{0}$. Furthermore, the curvature of boundary grows at most double exponentially.
	\end{thm}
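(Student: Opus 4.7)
The plan is to adapt the Chemin--Bertozzi--Constantin framework from $\mathbb{R}^{2}$ to the disk via the explicit Dirichlet Green's function
\[
G(x,y) = -\tfrac{1}{2\pi}\log|x-y| + \tfrac{1}{2\pi}\log\bigl(|y|\,|x-y^{*}|\bigr),\qquad y^{*} := y/|y|^{2},
\]
which splits $u = u_{1} + u_{2}$, with $u_{1}$ the planar Biot--Savart applied to $\omega\chi_{\Omega(t)}$ and $u_{2}$ the contribution from the image points $y^{*}\notin\overline{D}$. I would parametrize $\partial\Omega(t)$ by a constant-speed map $z(\cdot,t)\colon \mathbb{T}\to\partial\Omega(t)$ transported by $u$ and control the growth of $\norm{z(\cdot,t)}_{C^{1,\gamma}}$, which in turn dominates the curvature.

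For $u_{1}$, the classical Bertozzi--Constantin estimate, based on the tangential cancellation $W\cdot\tau = 0$ in the singular integral representation of $\nabla u_{1}$, gives
\[
\|\nabla u_{1}\|_{L^{\infty}(D)} \le C\|\omega_{0}\|_{L^{\infty}}\bigl(1 + \log^{+}\norm{z}_{C^{1,\gamma}}\bigr),
\]
together with a compatible $C^{\gamma}$ bound for $\nabla u_{1}$ along $\partial\Omega(t)$. The image part $u_{2}$ has kernel $\nabla_{x}^{\perp}\log|x-y^{*}| = (x-y^{*})^{\perp}/|x-y^{*}|^{2}$, which is smooth as long as $x,y$ stay uniformly away from $\partial D$ but degenerates when both approach the unit circle, since $y^{*}\to y$ as $|y|\to 1$. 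This is the main obstacle, because the patch is allowed to touch $\partial D$. Two structural facts tame it: (i) the Dirichlet condition forces $u$ to be tangent to $\partial D$, so the flow $\Phi_{t}$ preserves $D$ and $\partial\Omega(t)$ can only graze the boundary; (ii) the identity $|y|\,|x-y^{*}| = |x-y|$ for $|x|=1$ allows one to compare the image kernel to a reflected copy of the planar kernel. Combining these with a reflection-type argument in the spirit of Kiselev--Ryzhik--Yao--Zlato$\check{s}$ for the half plane, I would derive
\[
\|\nabla u_{2}\|_{L^{\infty}(D)} + [\nabla u_{2}]_{C^{\gamma}(\partial\Omega(t))} \le C\bigl(1 + \log^{+}\norm{z}_{C^{1,\gamma}}\bigr)^{2},
\]
uniformly in the distance from $\partial\Omega(t)$ to $\partial D$.

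With these bounds in hand, differentiating $\dot z = u(z,t)$ in the tangential parameter and propagating the $C^{1,\gamma}$ norm yields the Chemin-type differential inequality
\[
\frac{d}{dt}\norm{z(\cdot,t)}_{C^{1,\gamma}} \le C\norm{z(\cdot,t)}_{C^{1,\gamma}}\bigl(1 + \log^{+}\norm{z(\cdot,t)}_{C^{1,\gamma}}\bigr),
\]
whose solution is at most double exponential in $t$. Uniqueness follows from the Yudovich theory together with the persistence of the $C^{1,\gamma}$ norm; since the curvature of $\partial\Omega(t)$ is dominated by $\norm{z(\cdot,t)}_{C^{1,\gamma}}$, this delivers the stated double exponential bound on the curvature growth.
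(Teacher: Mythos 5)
Your plan correctly identifies the Green's function decomposition $u = u_{1}+u_{2}$ and the near-boundary degeneracy of the image kernel as the two central issues, but the key estimate you propose for the image part is not true and papers over exactly the hard step.

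The claimed bound
\[
\|\nabla u_{2}\|_{L^{\infty}(D)} + [\nabla u_{2}]_{C^{\gamma}(\partial\Omega(t))} \le C\bigl(1 + \log^{+}\norm{z}_{C^{1,\gamma}}\bigr)^{2}
\]
cannot hold: the $C^{\gamma}$ seminorm of $\nabla u$ (and likewise of $\nabla u_{2}$) along the patch boundary is \emph{not} logarithmic in the $C^{1,\gamma}$ size of the curve --- it is of the same order as that $C^{1,\gamma}$ norm itself. This is already so for the planar Biot--Savart piece $u_{1}$, which is why the Bertozzi--Constantin argument never bounds $[\nabla u_{1}]_{C^{\gamma}}$ on its own; it bounds $\|(\nabla u_{1})W\|_{\dot C^{\gamma}}$, where $W$ is a divergence-free field tangent to $\partial\Omega$, and the cancellation coming from $W\cdot\nu = 0$ on $\partial\Omega$ is indispensable. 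You invoke this cancellation for $u_{1}$ but then drop it for $u_{2}$, asserting a raw $\log^{2}$ bound instead. If you plug a true (linear-in-curvature) size of $[\nabla u_{2}]_{C^{\gamma}}$ into your Chemin-type ODE you get a quadratic right-hand side, which only gives finite-time control and does not prove global regularity at all, let alone the double-exponential rate.

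The actual difficulty, which your proposal compresses into ``a reflection-type argument in the spirit of Kiselev--Ryzhik--Yao--Zlato\v{s},'' is to establish the analogue of the Bertozzi--Constantin cancellation for the image integral near $\partial D$. In the disk this requires quantitative geometric information about $\partial\widetilde\Omega$ near the contact point (a ``flatness'' lemma inherited from the tangent field), a comparison of $\nabla^{2}\widetilde v$ to the contribution of a tangent disk with a symmetry cancellation, and a careful accounting of the conformal factor $1/|y|^{4}$ and the induced tangent field $\widetilde w$ on the reflected patch, since reflection across the circle is not an isometry. These steps are the substance of the paper's Section 2 (Propositions 2.3--2.5 and Lemmas 2.6--2.9) and none of them is present or obviated in your outline. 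Separately, the choice between Chemin's constant-speed parametrization and the Bertozzi--Constantin level-set function $\varphi$ with $w = \nabla^{\perp}\varphi$ is a legitimate stylistic divergence from the paper, but it does not change the fact that you must propagate a conormal quantity $(\nabla u)w$ rather than $\nabla u$ itself, and your stated differential inequality does not follow from the estimate you assert.
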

	
	\begin{thm}\label{generalpatch}
		Let $\gamma \in (0,1)$, then for each $C^{1,\gamma}$ patch initial data $\omega_{0}$, there exists a unique global regular $C^{1,\gamma}$ patch solution $\omega$ to (1.1)-(1.2) with $\omega(\cdot,0) = \omega_{0}$. The curvature of boundary grows at most triple exponentially.	
	\end{thm}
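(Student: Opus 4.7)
The plan is to reduce Theorem~\ref{generalpatch} to the single-patch case (Theorem~\ref{singlepatch}) by introducing the new dangerous quantity --- the minimum pairwise separation
\[
d(t) := \min_{k\ne j}\,\mathrm{dist}\bigl(\overline{\Omega_k(t)},\,\overline{\Omega_j(t)}\bigr)
\]
--- and showing its decay is at worst double exponential. Local $C^{1,\gamma}$ existence is supplied by a standard contraction argument on the patch boundaries and persists as long as $d(t)>0$ and $\max_k\|\Omega_k(t)\|_{C^{1,\gamma}} < \infty$. Uniqueness is inherited from Yudovich's theorem. The proof therefore reduces to an a priori bound on $\max_k\|\Omega_k(t)\|_{C^{1,\gamma}}$.

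\textbf{Step 1: velocity-gradient bound on $\partial\Omega_k$.} Decompose $u = u_k^{\mathrm{self}} + \sum_{j\ne k}u_j^{\mathrm{ext}}$, each summand being the Biot--Savart velocity of one patch computed via the disk Green's function
\[
G(x,y) = -\tfrac{1}{2\pi}\log|x-y| + \tfrac{1}{2\pi}\log(|y|\,|x-y^*|),\quad y^*=y/|y|^2.
\]
The self-part inherits, from the single-patch analysis of Theorem~\ref{singlepatch} (which uses the same Green's function and is uniform as $\Omega_k$ touches $\partial D$), a bound of the form $\|\nabla u_k^{\mathrm{self}}\|_{L^\infty(\partial\Omega_k)} \le C(1+\log^+\|\Omega_k(t)\|_{C^{1,\gamma}})$. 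For $j\ne k$, since $\mathrm{dist}(x,\Omega_j) \ge d(t)$ on $\partial\Omega_k$, direct differentiation of $G$ gives $\|\nabla u_j^{\mathrm{ext}}\|_{L^\infty(\partial\Omega_k)} \le C(1+|\log d(t)|)$. Packaging these additive contributions into a product, as is standard in the two-H\"older-norm Chemin analysis, yields
\[
\|\nabla u\|_{L^\infty(\partial\Omega_k)} \le C\bigl(1+\log^+\|\Omega_k(t)\|_{C^{1,\gamma}}\bigr)\bigl(1+|\log d(t)|\bigr).
\]

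\textbf{Step 2: double-exponential decay of $d(t)$.} The Yudovich velocity is log-Lipschitz, so for two trajectories in distinct patches
\[
\frac{d}{dt}|\Phi_t(x_0)-\Phi_t(y_0)| \ge -C|\Phi_t(x_0)-\Phi_t(y_0)|\bigl(1+|\log|\Phi_t(x_0)-\Phi_t(y_0)||\bigr),
\]
and Gronwall on $-\log d$ yields $d(t)\ge\exp(-e^{Ct})$, hence $|\log d(t)|\le e^{Ct}$. Combining with Step~1 and the Chemin-type curvature-transport inequality $\tfrac{d}{dt}\log^+\|\Omega_k(t)\|_{C^{1,\gamma}} \le C\|\nabla u\|_{L^\infty(\partial\Omega_k)}$, and writing $L(t):=\max_k\log^+\|\Omega_k(t)\|_{C^{1,\gamma}}$, one obtains $L'(t) \le Ce^{Ct}(1+L(t))$. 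A final Gronwall gives $1+L(t) \le (1+L(0))\exp(e^{Ct})$, so $\max_k\|\Omega_k(t)\|_{C^{1,\gamma}} \le \exp\bigl(\exp(Ce^{Ct})\bigr)$, which is the asserted triple-exponential curvature growth.

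\textbf{Main obstacle.} The chief difficulty lies in the self-part estimate of Step~1 \emph{uniformly} as $\Omega_k(t)$ touches $\partial D$: on $\partial D$ the image point $y^*=y/|y|^2$ coincides with $y$, so the reflection term in $G$ ceases to be a smooth correction to the planar Newtonian kernel $-\tfrac{1}{2\pi}\log|x-y|$ and must be handled alongside the principal part, extracting the key cancellation. This is precisely the analysis carried out in the proof of Theorem~\ref{singlepatch}, which is why the multi-patch problem decouples cleanly into a patch-by-patch application of the single-patch estimate together with the log-Lipschitz separation bound of Step~2; the only genuine loss in passing from double to triple exponential is the extra logarithm produced by coupling $\log\|\Omega_k\|_{C^{1,\gamma}}$ and $|\log d|$ in the Chemin velocity-gradient bound.
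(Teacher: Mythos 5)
Your overall strategy --- decompose $u$ by patch, get a double-exponential lower bound on the separation $d(t)$ via the log-Lipschitz estimate, and close a Gronwall argument in $\log\|\Omega_k(t)\|_{C^{1,\gamma}}$ --- is indeed the skeleton of the paper's proof, and your Step~2 bound $|\log d(t)| \lesssim e^{Ct}$ matches the paper's bound on $\delta(t)^{-1}$. But there is a genuine gap in the step
\[
\frac{d}{dt}\log^{+}\|\Omega_k(t)\|_{C^{1,\gamma}} \le C\|\nabla u\|_{L^\infty(\partial\Omega_k)},
\]
which you call a ``Chemin-type curvature-transport inequality.'' This inequality does not hold for vortex patches; if it did, the classical regularity result would be a triviality. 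The Bertozzi--Constantin/Chemin estimate for the H\"older seminorm of the tangential field is (cf.\ the paper's (2.6))
\[
A_\gamma'(t) \le C_\gamma\|\nabla u\|_{L^\infty}A_\gamma(t) + \|(\nabla u)\,w\|_{\dot C^\gamma(\Omega_k)},
\]
and the whole content of the theory is in the second term, where one cannot simply use $\|(\nabla u)w\|_{\dot C^\gamma}\le\|\nabla u\|_{L^\infty}\|w\|_{\dot C^\gamma}$ because an extra piece $\|\nabla u\|_{\dot C^\gamma}\|w\|_{L^\infty}$ is present. For the \emph{self} velocity this piece is controlled by the geometric cancellation (tangency of $w_k$ to $\partial\Omega_k$), but for an \emph{external} velocity $u_j$ with $j\ne k$ that cancellation is unavailable, since $w_k$ is not tangent to $\partial\Omega_j$; one is forced to use the crude $\|\nabla u_j\|_{\dot C^\gamma(\Omega_k)}\|w_k\|_{L^\infty}$, and $\|\nabla u_j\|_{\dot C^\gamma(\Omega_k)}$ involves a second derivative of the kernel across the gap, which scales like a power of $d(t)^{-1}$ (the paper uses $\delta(t)^{-3}$), not merely $|\log d(t)|$ as in your Step~1.

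This matters quantitatively. The paper keeps the external contribution as an \emph{additive} term, arriving at $\widetilde A'(t) \le C\widetilde A(t)(\delta(t)^{-3} + \log_{+}\widetilde A(t))$, i.e. $L' \le C(\delta^{-3}+L)$ with $L=\log\widetilde A$, which Gronwall integrates to a double-exponential bound on $L$ (hence triple exponential on $\widetilde A$) even though $\delta^{-3}$ is itself double exponential. Your packaging is \emph{multiplicative}, $L'\le Ce^{Ct}(1+L)$, which again yields a double-exponential bound on $L$ --- but only because your coefficient $e^{Ct}$ (coming from $|\log d|$) is merely single exponential. If you restore the correct $\delta(t)^{-3}$ coefficient, your multiplicative Gronwall $L'\le C\delta^{-3}(1+L)$ gives a \emph{quadruple}-exponential bound on $\|\Omega_k\|_{C^{1,\gamma}}$, which is weaker than the claim. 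In other words, the triple exponential in your write-up is an artifact of two compensating inaccuracies: underestimating the external H\"older coefficient (log instead of power) and overcoupling it (product instead of sum). The paper's proof avoids both: it retains the full Chemin-type inequality with $(\nabla u)w$, estimates the external $\dot C^\gamma$ contribution by $C(\omega_0)N\delta(t)^{-3}A_\infty(t)$, and exploits the additive structure of the resulting ODE.

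To repair your argument you would need to (i) replace the heuristic transport inequality with the genuine one, $A_\gamma'\le C\|\nabla u\|_{L^\infty}A_\gamma + \|(\nabla u)w\|_{\dot C^\gamma}$, (ii) estimate the external piece $\|(\nabla u_j)w_k\|_{\dot C^\gamma(\Omega_k)}$ by $\|\nabla u_j\|_{\dot C^\gamma}\|w_k\|_{L^\infty}+\|\nabla u_j\|_{L^\infty}\|w_k\|_{\dot C^\gamma}$ with $\|\nabla u_j\|_{\dot C^\gamma(\Omega_k)}\lesssim\delta(t)^{-3}$, and (iii) keep the resulting $\delta(t)^{-3}A_\infty$ as an additive term in the combined quantity $\widetilde A=A_\gamma A_{\inf}^{-1}+A_\infty$ before applying Gronwall.
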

	\textbf{Remark}. It is not clear if the triple exponential estimate is sharp. We have no concrete scenario for it, but at the same time improving this estimate requires non-trivial new ideas.\\

	In a special case where the initial patch consists of two symmetric single patches, we have a sharp upper bound estimate.
	
	\begin{thm}\label{symmetriccase}
		Let $\gamma \in (0,1)$, $\omega_{0}(x) = \chi_{\Omega_{1}}(x) - \chi_{\Omega_{2}}(x)$, where $\Omega_{1}$ and $\Omega_{2}$ are two single patches that are symmetric with respect to the line $x_{1}=0$. Then for each $C^{1,\gamma}$ initial data $\omega_{0}$, there exists a unique global regular $C^{1,\gamma}$ patch solution $\omega$ to (1.1)-(1.2) with $\omega(\cdot,0) = \omega_{0}$. The curvature of boundary grows at most double exponentially.	
	\end{thm}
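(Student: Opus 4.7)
The plan is to exploit the odd reflection symmetry in $x_1$ to reduce the problem to an effectively single-patch evolution in a half-disk, and then run the proof scheme of Theorem~\ref{singlepatch}.

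Writing $R(x_1,x_2):=(-x_1,x_2)$, I would first check that the odd-in-$x_1$ symmetry $\omega_0\circ R=-\omega_0$ is preserved by the Euler flow in $B_1(0)$: since the Dirichlet Green's function of the disk satisfies $G(Rx,Ry)=G(x,y)$, odd-in-$x_1$ vorticity produces an odd-in-$x_1$ stream function, so $u_1$ is odd and $u_2$ is even in $x_1$. In particular $u_1\equiv 0$ on the axis $\{x_1=0\}$, which is therefore invariant under the flow map $\Phi_t$. Consequently $\Omega_2(t)=R\,\Omega_1(t)$ for all times at which the solution exists, and the two patches never cross or touch the axis.

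Next, by the method of images, the velocity $u$ restricted to the half-disk $D^+:=B_1(0)\cap\{x_1>0\}$ induced by $\omega(\cdot,t)=\chi_{\Omega_1(t)}-\chi_{R\Omega_1(t)}$ coincides with the velocity produced in $D^+$ by the single patch $\chi_{\Omega_1(t)}$ under Dirichlet conditions on $\partial D^+$, whose Green's function is
\[
G^+(x,y)=G(x,y)-G(x,Ry).
\]
The problem is therefore equivalent to the evolution of a single $C^{1,\gamma}$ vortex patch in $D^+$. I would then adapt the Chemin--Bertozzi--Constantin scheme used for Theorem~\ref{singlepatch}: derive a logarithmic bound $\|\nabla u\|_{L^\infty}\lesssim 1+\log(1+\|\Omega_1(t)\|_{C^{1,\gamma}})$ together with a matching $C^\gamma$ bound on $\nabla u$, and close a Gronwall inequality for $\|\Omega_1(t)\|_{C^{1,\gamma}}$. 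Because the two patches are separated by the invariant axis and the dynamics is genuinely single-patch in $D^+$, the inter-patch distance estimates responsible for the extra exponential in Theorem~\ref{generalpatch} never enter, and the conclusion is at most double exponential growth of the curvature.

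The principal obstacle, and the only real departure from the proof of Theorem~\ref{singlepatch}, is that $D^+$ has two right-angle corners at $(0,\pm 1)$, where standard smooth-boundary elliptic estimates for the boundary correction in $G^+$ are unavailable. I would handle this by analyzing the explicit form of $G^+$ near each corner, using that there it is built from the free-space logarithm plus an explicit image contribution whose gradient can be shown to remain log-Lipschitz up to the corner; this is enough to preserve the $C^\gamma$ estimate on $\nabla u$ needed to close the Gronwall argument. An alternative would be to stay in the full disk and use the antisymmetry to cancel the problematic near-axis contributions to $\nabla u$ directly, avoiding corners at the cost of duplicating much of the single-patch analysis; I expect the image-kernel route to be cleaner.
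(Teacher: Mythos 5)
Your half-disk reduction is a valid alternative framing, but the corner worry you flag is actually illusory once you write $G^+$ out explicitly: since reflection $R$ in the axis commutes with inversion in $\partial D$ (i.e.\ $\widetilde{Ry}=R\widetilde{y}$), one has
\[
G^+(x,y)=-\tfrac{1}{2\pi}\bigl(\log|x-y|-\log|x-\widetilde{y}|-\log|x-Ry|+\log|x-R\widetilde{y}|\bigr),
\]
a sum of four free-space logarithms with no intrinsic corner singularity. These four kernels are in one-to-one correspondence with the pieces the paper estimates while staying in the full disk: the direct Biot--Savart $v_1$ (Bertozzi--Constantin), the disk-image $\widetilde{v}_1$ (the Section~2 machinery), the axis-image $v_2$ (for which the paper explicitly invokes the KRYZ half-plane analysis), and the doubly-reflected $\widetilde{v}_2$ (Section~2 machinery adapted with the symmetry $\varphi_1(x)=\varphi_2(\bar{x})$). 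So your ``image-kernel route'' and your ``stay in the full disk and use antisymmetry'' alternative are, at the level of the integrals actually estimated, the same computation; the paper takes the latter packaging. Where your sketch is genuinely thin is the closing step: you propose ``a matching $C^\gamma$ bound on $\nabla u$,'' but $\|\nabla u\|_{\dot C^\gamma}$ is generically infinite for a patch; what must be bounded is the weighted quantity $\|(\nabla u)\,w\|_{\dot C^\gamma(\Omega_1)}$ with $w=\nabla^\perp\varphi$ tangent to $\partial\Omega_1$, and the whole point of Propositions~\ref{T2estimate}--\ref{anulusestimate} and Lemma~\ref{geometriclemma} is that near $\partial D$ this forces a geometric flatness argument comparing $\widetilde{\Omega}$ to a tangent ball. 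That step does not come for free from ``log-Lipschitz up to the corner'' and needs to be carried out for each of the image kernels (in particular for $\widetilde{v}_2$, where the paper also needs the symmetry relation $w_2(x)=-\overline{w_1(\bar x)}$ to convert $|w_1(x)|$ into $|\widetilde{w}_2(P_x)|+O(A_\gamma d(x)^\gamma)$). Your outline does not engage with this weighted estimate, which is the actual content of the proof.
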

	
	\begin{thm}\label{sharpexample}
		With the same assumptions as Theorem \ref{symmetriccase}, there exists an $\omega_{0}$ in $C^{1,\gamma}$ such that the curvature of the corresponding patch solution grows at a double exponential speed.
	\end{thm}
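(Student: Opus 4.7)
The plan is to adapt the Kiselev--\v{S}ver\'ak double exponential growth mechanism (originally developed for smooth vorticity in the disk) to the patch setting, exploiting the $x_1 \mapsto -x_1$ symmetry to create an invariant vertical segment on which the Lagrangian flow is effectively hyperbolic near a chosen point of $\partial D$.

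\textbf{Step 1: Choice of initial data.} I would take $\Omega_1$ to be a small $C^{1,\gamma}$ region contained in the right half-disk $\{x_1 > 0\}$ whose closure approaches the boundary point $(0,1)$, with one arc of $\partial\Omega_1$ very nearly parallel to and close to the diameter $\{x_1 = 0\}$. Define $\Omega_2 := \{(-x_1, x_2) : (x_1,x_2)\in\Omega_1\}$. By the imposed symmetry, $u_1 \equiv 0$ on $\{x_1=0\}\cap\overline{D}$, so this diameter is invariant under $\Phi_t$ and the patches $\Omega_1(t),\Omega_2(t)$ remain on their respective sides for all $t$.

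\textbf{Step 2: Log-singular hyperbolic compression.} Using the explicit Green's function $G$ of $B_1$, write
\[
u_1(x,t) \;=\; \int_{\Omega_1(t)}\!\partial_{x_2}G(x,y)\,dy \;-\; \int_{\Omega_2(t)}\!\partial_{x_2}G(x,y)\,dy.
\]
Differentiating in $x_1$ and evaluating at $x=(0,y_0)$ for some $y_0$ close to $1$, I would split the kernel integral into a near-axis contribution and a bulk contribution following the Kiselev--\v{S}ver\'ak cutoff scheme, and prove
\[
\bigl|\partial_1 u_1(0,y_0,t)\bigr| \;\geq\; c\,\log\!\frac{1}{d(t)},
\]
where $d(t) := \mathrm{dist}\bigl(\Omega_1(t)\cap B_{1/4}(0,1),\,\{x_1 = 0\}\bigr)$. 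The sign, namely compression toward the axis on the relevant arc, follows from the geometry of $\Omega_1$ near $(0,1)$ together with the Dirichlet condition at $\partial D$.

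\textbf{Step 3: Double exponential collapse of $d(t)$.} Tracking the Lagrangian image $P(t)$ of a distinguished initial boundary point at height $y_0$ and distance $d_0\ll1$ from the axis, a Taylor expansion of $u_1$ around $(0,y_0)$ combined with a bootstrap that $\partial\Omega_1(t)$ remains $O(d)$-close to vertical in a small neighbourhood of $P(t)$ gives
\[
\dot d(t) \;\leq\; -\,c\,\log(1/d(t))\, d(t),
\]
from which $d(t) \leq \exp(-e^{ct})$ on the (infinite, by Theorem~\ref{symmetriccase}) time of existence. Parametrizing the nearby arc of $\partial\Omega_1$ as a graph $x_1 = f(x_2,t)$ and differentiating the transport relation $f_t + u_2 f_{x_2} = u_1$ twice in $x_2$, the dominant term in the evolution of $f_{x_2 x_2}$ is $\partial_1 u_1(0,y_0,t)\cdot f_{x_2 x_2}$, yielding
\[
\norm{\Omega_1(t)}_{C^2} \;\gtrsim\; \exp\!\Bigl(c\!\int_0^t \log(1/d(s))\,ds\Bigr) \;\geq\; \exp(e^{ct})
\]
for large $t$, which is the asserted double exponential lower bound on the curvature.

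The principal difficulty lies in Steps 2--3: establishing the logarithmic lower bound on $|\partial_1 u_1|$ with a definite, favorable sign, and controlling the remainders in the Taylor expansion of $u_1$ around $(0,y_0)$ and in the ODE for $f_{x_2 x_2}$. This requires a careful bootstrap propagating the fact that the driving arc of $\partial\Omega_1(t)$ remains nearly vertical and trapped in a thin strip near $(0,1)$, so that the linearization around the invariant axis stays uniformly valid throughout the time window driving the double exponential mechanism, and the $O(d^2\log(1/d))$ error terms do not swamp the leading $d\log(1/d)$ compression.
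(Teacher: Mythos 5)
Your proposal is in the right spirit and correctly identifies the Kiselev--\v{S}ver\'ak hyperbolic compression at a boundary stagnation point as the engine, which is indeed what the paper uses; however, the central mechanism that actually produces the \emph{double} exponential rate is elided, and a few of your concrete steps would not go through as stated.

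First, your claimed estimate $|\partial_1 u_1(0,y_0,t)| \geq c\log(1/d(t))$ with $d(t)$ itself decaying double exponentially is not justified and cannot be obtained from ``$\Omega_1$ a small $C^{1,\gamma}$ region.'' The logarithm in the Key Lemma comes from integrating vorticity against the kernel $y_1 y_2/|y|^4$ over an annular region of scales from roughly $d$ out to $O(1)$; to get $\log(1/d(t))$ at time $t$ you must guarantee that $\omega \equiv 1$ fills essentially that whole range of scales at time $t$. The paper (following \cite{KS}) does not prove this directly for a single distance $d(t)$; it instead tracks two trajectories $a(t)$ (fastest, via $\overline u$) and $b(t)$ (slowest, via $\underline u$) and shows the quadrilateral $\mathcal{O}_t = \mathcal{O}(a(t),b(t))$ is a Lagrangian trap on which $\omega\equiv 1$. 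This produces the contribution $\log(b(t)/a(t))$ to $\Omega(a(t),0,t)$, which feeds back into $\dot a$, and the resulting ODE is a \emph{difference} estimate
\[
\frac{d}{dt}\bigl(\log a(t) - \log b(t)\bigr) \;\leq\; \frac{1}{2\pi}\bigl(\log a(t)-\log b(t)\bigr) + 4C,
\]
giving $a(t)\leq \epsilon^{8\exp(t/2\pi)}$. Without this two-timescale trapping, the Key Lemma only gives a fixed lower bound $\Omega \gtrsim \log\delta^{-1}$ (from incompressibility, with $\delta$ the width of the excised strip at $t=0$), which is the source of the \emph{exponential} growth example in the paper's warm-up, not the double exponential one. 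Your ODE $\dot d \leq -c\log(1/d)\,d$ is the right shape but you have not proved its hypothesis. Relatedly, the initial data must be $\omega_0 = 1$ on essentially all of $D^{+}$ minus a thin strip near the axis (the multi-scale reservoir of vorticity is what gets compressed), not a ``small region near $(0,1)$.''

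Second, your Step~3 derivation of the curvature blow-up via a graph parametrization $x_1 = f(x_2,t)$ and an evolution equation for $f_{x_2 x_2}$ is problematic: it requires pointwise control of $\nabla^2 u$, which is not available for patch vorticity (the velocity gradient is only log-Lipschitz along the patch boundary, and second derivatives are genuinely singular). The paper sidesteps this entirely with a geometric argument: the tracked boundary point $\Phi_t(x_0)$ slides along $\partial D$ toward the corner at the hyperbolic point, the axis $\{x_1=0\}$ is a symmetry barrier the patch cannot cross, and $\partial D$ itself is a barrier, so $\partial P(t)$ must turn by roughly $\pi/2$ within a neighborhood of size $\sim \Phi^1_t(x_0) \leq \epsilon^{8\exp(t/2\pi)}$; hence the curvature is at least the reciprocal of that length, yielding the double exponential lower bound without touching second derivatives of $u$. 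You should replace your Step~3 differentiation with this trapping-in-a-corner argument.
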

	
	The rest of the paper is organized as follows. In section 2, we give the proof of Theorem \ref{singlepatch}. In section 3, we deal with the general case, and provide the proof of Theorem \ref{generalpatch}. In section 4, we are looking into a special symmetric case, and prove Theorem \ref{symmetriccase}. In last section, we extend the example of \cite{KS} to show that the upper bound obtained in section 4 is actually sharp, thus proving Theorem \ref{sharpexample}.

	\section{Global regularity for single patch}
	We consider a single patch $\Omega(t)\subset D$, with
	\[
	\omega(x,t) = \theta _{0} \chi _{\Omega(t)}(x).
	\] 
	Without loss of generality, we set $\theta_{0} = 1$ throughout the section.\\
	
	Now, following \cite{Constantin}, we reformulate the vortex patch evolution in terms of the evolution of a function $\varphi(x,t)$, which defines the patch via
	\[
	\Omega(t) = \{x : \varphi(x,t) > 0\}.
	\]

	First, if $\partial\Omega(0)$ is a simple closed $C^{1,\gamma}$ curve, then there exists a function $\varphi_{0}\in C^{1,\gamma}(\overline{\Omega(0)})$, such that $\varphi_{0} > 0$ on $\Omega(0)$, $\varphi_{0} = 0$ on $\partial\Omega(0)$ and $\inf\limits_{\partial\Omega(0)}|\nabla\varphi_{0}|>0$. Such $\varphi_{0}$ can be obtained, for instance, by solving the Dirichlet problem
	
	\begin{tabbing}
		$\qquad$ $\qquad$ $\qquad$ $\qquad$ $-\Delta$ \= $\varphi_{0}$ $=$ $f$ on $\Omega(0)$,\\
		\> $\varphi_{0} = 0$ on $\partial\Omega(0)$,
	\end{tabbing}
	
	with arbitrary $0 \leq f \in C_{0}^{\infty}(\Omega(0))$(see the treatment in \cite{KRYZ}, or \cite{hardt} for a complete proof).\\

	Next, according to the Biot-Savart law and Green's function for $D$ (see, for example \cite{Trudinger}), we have

	\begin{equation}\label{BSL1}
	\begin{split}
	u(x,t) &= -\frac{1}{2\pi}\int_{\Omega(t)}\frac{(x-  y)^{\perp}}{|x-y|^{2}}dy + \frac{1}{2\pi}\int_{\widetilde{\Omega}(t)}\frac{(x-y)^{\perp}}{|x-y|^{2}}\frac{1}{|y|^{4}}dy\\
	&:= v(x,t) + \widetilde{v}(x,t),
	\end{split}
	\end{equation}
	where $\widetilde{\Omega}(t)$ is the image of $\Omega(t)$ reflected over $\partial D$, defined to be $\{\frac{x}{|x|^{2}}:x\in \Omega(t)\}$\\
	
	Before we start any calculation, we recall the flow map $\Phi_{t}(x)$ from (\ref{particletrajectory}) which describes the position of the particle at time $t$ starting from $x$. For $x\in \Omega(t)$, we set $\varphi(x,t) = \varphi_{0}(\Phi_{t}^{-1}(x))$, with $\Phi_{t}^{-1}$ being the inverse map of $\Phi_{t}$, so that $\varphi$ solves
	\[
	\partial_{t}\varphi + (u\cdot\nabla)\varphi = 0
	\]
	on $\{ (t,x) : t > 0 \ \textrm{and}\  x\in\Omega(t) \}$. Thus for each $t \geq 0$, $\varphi(\cdot,t) > 0$ on $\Omega(t)$, it vanishes on $\partial\Omega(t)$, and undefined on $\mathbb{R}^{2}\setminus\overline{\Omega(t)}$. Now we let
	\[
	w = (w_{1},w_{2})=\nabla^{\perp}\varphi=(\partial_{2}\varphi,-\partial_{1}\varphi),
	\]
	and define
	
	\begin{equation}\label{AAA}
	\begin{split} 
	A_{\gamma}(t) &:= \norm{w(\cdot,t)}_{\dot{C}^{\gamma}(\Omega(t))} = \sup\limits_{x,y\in\Omega(t)}\frac{|w(x,t)-w(y,t)|}{|x-y|^{\gamma}}, \\ 
	A_{\infty}(t) &:= \norm{w(\cdot,t)}_{L^{\infty}(\Omega(t))},\\
	A_{\inf}(t) &:= \inf\limits_{x\in\partial\Omega(t)}|w(x,t)|.
	\end{split}
	\end{equation}
	
	By our choice of $\varphi_{0}$, we have
	\[
	A_{\gamma}(0), \: A_{\infty}(0),\: A_{\inf}^{-1}(0) \: < \infty.
	\]
	
	Since $w = \nabla^{\perp}\varphi$, we know $w$ is divergence free and solves
	\begin{equation}\label{w-evolution}
	w_{t} + (u\cdot\nabla)w = (\nabla u)w.
	\end{equation}
	
	First we need a claim similar to Proposition 1 in \cite{Constantin}:
	\[
	\norm{\nabla u(\cdot,t)}_{L^{\infty}(\mathbb{R}^{2})} \:\leq\: C_{\gamma}\Big(1+\log_{+}\frac{A_{\gamma}(t)}{A_{\inf}(t)} \Big)
	\]
	with $\log_{+}(x) = \max\{\log(x),0\}$ and a constant $C_{\gamma}$ only depending on $\gamma$ that changes from line to line throughout the paper.\\
	
	Note that by (\ref{BSL1}), $u(x,t) = v(x,t) + \widetilde{v}(x,t)$. By Proposition 1 in \cite{Constantin}, $\norm{\nabla v(\cdot,t)}_{L^{\infty}(\mathbb{R}^{2})}$ can be bounded by $C_{\gamma}\Big(1+\log_{+}\dfrac{A_{\gamma}(t)}{A_{\inf}(t)} \Big)$.\\
	
	For $\widetilde{v}(x,t)$, first note

    \begin{equation}\label{tildev}
	\begin{split}
		\nabla\widetilde{v}(x,t) & =  \begin{pmatrix}
		\partial_{1}\widetilde{v_{1}} & \partial_{2}\widetilde{v_{1}} \\
		\partial_{1}\widetilde{v_{2}} & \partial_{2}\widetilde{v_{2}} 
		\end{pmatrix}\\
		& =  \frac{1}{2\pi}p.v.\int_{\widetilde{\Omega}(t)}\frac{\sigma(x-y)}{|x-y|^{2}}\frac{1}{|y|^{4}}dy\\
		&  \quad + \frac{1}{2|x|^{4}}\chi_{\widetilde{\Omega}(t)}(x)\begin{pmatrix}
		0 & -1\\
		1 & 0
		\end{pmatrix},
	\end{split}
    \end{equation}
	
	where $\sigma(x) = \begin{pmatrix}
	\dfrac{-2x_{1}x_{2}}{|x|^{2}} & 
	\dfrac{x_{1}^{2}-x_{2}^{2}}{|x|^{2}}\\
	\dfrac{x_{1}^{2}-x_{2}^{2}}{|x|^{2}} &
	\dfrac{2x_{1}x_{2}}{|x|^{2}}
	\end{pmatrix}$.
	
	Observe that $\dfrac{1}{2|x|^{4}}\chi_{\widetilde{\Omega}(t)}(x)\begin{pmatrix}
	0 & -1\\
	1 & 0
	\end{pmatrix}$ can be bounded by a universal constant. since $\widetilde{\Omega}(t) $For the integral term, if $x \in B_{1/2}(0)$, $|x-y| \geq 1/2$, it can be easily bounded by a universal constant. If $x \notin B_{1/2}(0)$, we split the integral domain, denote
	\[
	I_{1}(x) = \dfrac{1}{2\pi}p.v.\displaystyle\int_{\widetilde{\Omega}(t)\cap\{|x-y|\geq\delta\}}\frac{\sigma(x-y)}{|x-y|^{2}}\frac{1}{|y|^{4}}dy,
	\]
	\[
	I_{2}(x) = \dfrac{1}{2\pi}p.v.\displaystyle\int_{\widetilde{\Omega}(t)\cap\{|x-y|< \delta\}}\frac{\sigma(x-y)}{|x-y|^{2}}\frac{1}{|y|^{4}}dy,
	\]

	 here we follow the argument in \cite{Constantin} and set $\delta^{\gamma} = \dfrac{A_{\inf}}{A^{\gamma}}$.\\
	
	\begin{tabbing}
		\hspace{1cm}$I_{1}(x)$ \= $\leq$ \=$\dfrac{1}{2\pi}\displaystyle\int_{\widetilde{\Omega}(t)\cap\{|x-y|\geq\delta\}}\frac{1}{|x-y|^{2}}\frac{1}{|y|^{4}}dy$\\
		\> $\leq$ \> $\dfrac{1}{2\pi}\displaystyle\int_{\widetilde{\Omega}(t)\cap\{\delta\leq |x-y|\leq 2\}}\frac{1}{|x-y|^{2}}\frac{1}{|y|^{4}}dy$\\
		\> \> $+$ $\dfrac{1}{2\pi}\displaystyle\int_{\{|y|\geq 1 \}\cap\{|x-y|>2\}}\frac{1}{|x-y|^{2}}\frac{1}{|y|^{4}}dy$\\
		\> $\leq$ \> $\dfrac{1}{2\pi}\Big(\log\big(\frac{2}{\delta}\big)+\frac{1}{8}\Big).$
	\end{tabbing}
	In the above estimate, we assume $\delta \leq 2$. If $\delta > 2$, $I_{1}(x)$ can be bounded by a universal constant, which is better.\\
	
	For $I_{2}(x)$, we need to consider two cases. First assume $\delta \geq 1$, then we have
	\begin{tabbing}
		\hspace{1cm}$I_{2}(x)$ \= $=$ \= $\dfrac{1}{2\pi}p.v.\displaystyle\int_{\widetilde{\Omega}(t)\cap\{|x-y|< \delta\}}\frac{\sigma(x-y)}{|x-y|^{2}}\frac{1}{|y|^{4}}dy$\\
		\> $=$ \> $\dfrac{1}{2\pi}p.v.\displaystyle\int_{\widetilde{\Omega}(t)\cap\{|x-y| < 1\}}\frac{\sigma(x-y)}{|x-y|^{2}}\Big(\frac{1}{|y|^{4}}-\frac{1}{|x|^{4}}\Big)dy$\\
		\> \> $+$ $\dfrac{1}{2\pi}p.v.\displaystyle\int_{\widetilde{\Omega}(t)\cap\{|x-y| < 1\}}\frac{\sigma(x-y)}{|x-y|^{2}}\frac{1}{|x|^{4}}dy$\\
		\> \> $+$ $\dfrac{1}{2\pi}\displaystyle\int_{\widetilde{\Omega}(t)\cap\{1 \leq|x-y|< \delta\}}\frac{\sigma(x-y)}{|x-y|^{2}}\frac{1}{|y|^{4}}dy$\\
		\> $=$ \> $I_{21}(x) + I_{22}(x) + I_{23}(x)$.
	\end{tabbing}
	For $I_{21}(x)$, the singularity can be killed by an extra $|x-y|$ rising from $\Big(\dfrac{1}{|y|^{4}}-\dfrac{1}{|x|^{4}}\Big)$ by mean value theorem. So $I_{21}(x)$ can be bounded by a universal constant. For $I_{23}(x)$, as $|x-y| \geq 1$, $|y| \geq 1$, it can be bounded by a universal constant, too. For $I_{22}(x)$, as $\dfrac{1}{|x|^{4}}$ is a constant in the integral, all we need to estimate is $p.v.\displaystyle\int_{\widetilde{\Omega}(t)\cap\{|x-y| < 1\}}\frac{\sigma(x-y)}{|x-y|^{2}}dy$. Here we claim that this is bounded by a constant $C_{\gamma}$ depending on $\gamma$. A complete proof can be found in \cite{Constantin}, note that this is under the assumption $\delta \geq 1$.\\ 
	
	On the other hand, if $\delta < 1$
	\begin{tabbing}
		\hspace{1cm}$I_{2}(x)$ \= $=$ \= $\dfrac{1}{2\pi}\displaystyle\int_{\widetilde{\Omega}(t)\cap\{|x-y|< \delta\}}\frac{\sigma(x-y)}{|x-y|^{2}}\frac{1}{|y|^{4}}dy$\\
		\> $=$ \> $\dfrac{1}{2\pi}\displaystyle\int_{\widetilde{\Omega}(t)\cap\{|x-y| < \delta\}}\frac{\sigma(x-y)}{|x-y|^{2}}\Big(\frac{1}{|y|^{4}}-\frac{1}{|x|^{4}}\Big)dy$\\
		\> \> $+$ $\dfrac{1}{2\pi}\displaystyle\int_{\widetilde{\Omega}(t)\cap\{|x-y| < \delta\}}\frac{\sigma(x-y)}{|x-y|^{2}}\frac{1}{|x|^{4}}dy$\\
		\> $=$ \> $I^{\prime}_{21}(x) + I^{\prime}_{22}(x).$
	\end{tabbing}
	First note that $I^{\prime}_{21}(x) \leq \dfrac{1}{2\pi}\displaystyle\int_{\widetilde{\Omega}(t)\cap\{|x-y| < 1\}}\frac{1}{|x-y|^{2}}\Big|\frac{1}{|y|^{4}}-\frac{1}{|x|^{4}}\Big|dy$, similar to the estimate of $I_{21}(x)$, it is bounded by a universal constant. $I^{\prime}_{22}(x)$ can be bounded by a constant $C_{\gamma}$ depending only on $\gamma$, a complete estimate can be found in \cite{Constantin}.\\
	
	Hence, we have proved the following bound.
	\begin{prop}\label{gradient o fvelocity}
		Assume $u$ is given by the Biot-Savart law formula (\ref{BSL1}) and $A_{\gamma}$, $A_{\infty}$, $A_{\inf}$ are defined  by (\ref{AAA}). Then,
		\begin{equation}
		\norm{\nabla u(\cdot,t)}_{L^{\infty}(\mathbb{R}^{2})} \:\leq\: C_{\gamma}\Big(1+\log_{+}\frac{A_{\gamma}(t)}{A_{\inf}(t)} \Big).
		\end{equation}
	\end{prop}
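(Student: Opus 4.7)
The plan is to exploit the decomposition $u = v + \widetilde{v}$ in (\ref{BSL1}), which splits the velocity into the ``direct'' Biot--Savart contribution from $\Omega(t) \subset B_{1}(0)$ and the image contribution from the reflected set $\widetilde{\Omega}(t) \subset \mathbb{R}^{2}\setminus B_{1}(0)$. For $\nabla v$, the desired logarithmic bound is precisely Proposition 1 of Bertozzi--Constantin \cite{Constantin}, so I would quote that result as a black box. It then suffices to show that $\norm{\nabla \widetilde{v}(\cdot,t)}_{L^{\infty}(\mathbb{R}^{2})}$ is bounded by a constant depending only on $\gamma$ (so, in fact, by the weaker quantity on the right-hand side).

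Differentiating $\widetilde{v}$ produces, as in (\ref{tildev}), a principal-value singular integral with kernel $\sigma(x-y)|x-y|^{-2}|y|^{-4}$ on $\widetilde{\Omega}(t)$, together with a jump term of size $|x|^{-4}\chi_{\widetilde{\Omega}(t)}(x)$. Since $\widetilde{\Omega}(t)\subset\{|y|\geq 1\}$, the jump term is pointwise bounded by a universal constant. For the singular integral I would first dispose of the easy case $x \in B_{1/2}(0)$, where $|x-y|\geq 1/2$ uniformly so the integrand is $L^{1}$ with a universal bound. For $|x|\geq 1/2$ one has to confront the actual singularity at $y=x$, and this is where the real work is.

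The main obstacle is the weight $|y|^{-4}$, which breaks the exact odd symmetry of $\sigma(x-y)/|x-y|^{2}$ that would otherwise make the principal value bounded. I would follow the Bertozzi--Constantin blueprint: choose a cutoff scale $\delta$ with $\delta^{\gamma}=A_{\inf}/A_{\gamma}$, split into a far piece $I_{1}$ (where $|x-y|\geq\delta$) and a near piece $I_{2}$ (where $|x-y|<\delta$). The far piece is estimated by dropping $\sigma$ and using $|y|\geq 1$, which produces a $\log(2/\delta) \lesssim \gamma^{-1}\log_{+}(A_{\gamma}/A_{\inf})$ term plus a harmless tail. For the near piece, the key trick is to add and subtract the constant weight $|x|^{-4}$: by the mean value theorem (and the fact that $|x|,|y|\gtrsim 1$) the difference $|y|^{-4}-|x|^{-4}$ is $O(|x-y|)$, which kills the principal-value singularity and leaves an integral bounded by a universal constant, while the remaining term factors as $|x|^{-4}\cdot \mathrm{p.v.}\!\int \sigma(x-y)/|x-y|^{2}\, dy$ on $\widetilde{\Omega}(t)\cap B_{\delta}(x)$, which is the classical bounded principal value handled in \cite{Constantin}. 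A secondary split into the regimes $\delta\geq 1$ and $\delta<1$ is needed so that the MVT remainder estimate is applied on a scale where $|x-y|$ is actually small. Combining the two pieces with the estimate on $\nabla v$ produces the stated bound.
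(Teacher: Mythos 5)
Your proposal follows the paper's proof essentially line for line: use Bertozzi--Constantin's Proposition~1 for $\nabla v$, then differentiate $\widetilde v$ as in (\ref{tildev}), bound the jump term and the easy case $x\in B_{1/2}(0)$ by absolute values, split the singular integral at the Bertozzi--Constantin scale $\delta^\gamma = A_{\inf}/A_\gamma$, and for the near piece add and subtract $|x|^{-4}$ so that the MVT kills the singularity while the constant-weight remainder falls to the classical p.v.\ estimate, with the additional $\delta\geq 1$ / $\delta<1$ case split. One small slip: your opening sentence asserts that $\norm{\nabla \widetilde v}_{L^\infty}$ is bounded by a $\gamma$-dependent \emph{constant}, but your own $I_1$ estimate then (correctly) produces a $\log(2/\delta)\sim\gamma^{-1}\log_+(A_\gamma/A_{\inf})$ term, so what you actually prove is the same logarithmic bound as for $\nabla v$, not a constant; this matches the paper and does not affect the conclusion since that log is already absorbed in the stated right-hand side.
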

	
	Next, we obtain from equation (\ref{w-evolution}),
	
	\begin{align}\label{Ainftyestimate}
	A^{\prime}_{\infty}(t) & \leq C_{\gamma}A_{\infty}(t)\Big(1+\log_{+}\frac{A_{\gamma}(t)}{A_{\inf}(t)} \Big),\\
	\label{Ainfestimate}
	A^{\prime}_{\inf}(t) & \geq -C_{\gamma}A_{\inf}(t)\Big(1+\log_{+}\frac{A_{\gamma}(t)}{A_{\inf}(t)} \Big).
	\end{align}
	
	The main step in the proof will be to get an appropriate bound on $A_{\gamma}$. A simple calculation and (\ref{w-evolution}) yield,
	
	\begin{equation}
	A^{\prime}_{\gamma}(t) \leq C_{\gamma}\norm{\nabla u(\cdot,t)}_{L^{\infty}(\mathbb{R}^{2})}A_{\gamma}(t) + \norm{\nabla u(\cdot,t)w(\cdot,t)}_{\dot{C}^{\gamma}(\Omega(t))}.
	\end{equation}
	
	Later we will show that
	\begin{equation}\label{Agammaestimate}
	A_{\gamma}^{\prime}(t) \leq C_{\gamma}A_{\gamma}(t)\Big(1+\log_{+}\dfrac{A_{\gamma}(t)}{A_{\inf}(t)} \Big) + A_{\infty}(t)
	\end{equation}
	by claiming that 
	
	\begin{equation}
	\norm{\nabla u(\cdot,t)w(\cdot,t)}_{\dot{C}^{\gamma}(\Omega(t))} \leq C_{\gamma}A_{\gamma}(t)\Big(1+\log_{+}\dfrac{A_{\gamma}(t)}{A_{\inf}(t)} \Big) + A_{\infty}(t).
	\end{equation}
	
	Assuming (\ref{Ainftyestimate}), (\ref{Ainfestimate}) and (\ref{Agammaestimate}), we can prove Theorem \ref{singlepatch} now.
	
	\begin{proof}[Proof of Theorem \ref{singlepatch}]
	
	Let $A(t) := \dfrac{A_{\gamma}(t) + A_{\infty}(t)}{A_{\inf}(t)}$, we have, from above estimates,
	\[
	A^{\prime}(t) \leq C_{\gamma}A(t)(1 + \log_{+}A(t)).
	\]
	Thus obtaining that $A(t)$ grows at most double-exponentially in time, and therefore, so is $\dfrac{A_{\gamma}(t)}{A_{\inf}(t)}$. Then the same rate of growth can be obtained for $A_{\infty}(t)$, $A_{\inf}(t)^{-1}$ and $A_{\gamma}(t)$ from (\ref{Ainftyestimate}), (\ref{Ainfestimate}) and (\ref{Agammaestimate}) respectively. Thus completing the proof.
	\end{proof}

	Now our goal is to derive a desired bound for $\norm{(\nabla u)w}_{\dot{C}^{\gamma}(\Omega(t))}$.\\

	Note that
	\[
	\norm{(\nabla u)w}_{\dot{C}^{\gamma}(\Omega(t))} \leq \norm{(\nabla v)w}_{\dot{C}^{\gamma}(\Omega(t))} + \norm{(\nabla\widetilde{v})w}_{\dot{C}^{\gamma}(\Omega(t))}.
	\] 
	
	Since $v$ is generated by the patch $\Omega(t)$, and $w$ is tangent to $\partial\Omega(t)$, Corollary 1 in \cite{Constantin} gives
	\[
	\norm{(\nabla v)w}_{\dot{C}^{\gamma}(\Omega(t))} \leq  C_{\gamma}\norm{\nabla v}_{L^{\infty}(\mathbb{R}^{2})}\norm{w}_{\dot{C}^{\gamma}(\Omega(t))}
	\]
	with a universal constant $C_{\gamma}$ only depending on $\gamma$. Note that in \cite{Constantin}, $w$ is defined in $\mathbb{R}^{2}$ and all the norms are over $\mathbb{R}^{2}$. We can use Whitney-type extension theorem (see page 170 in \cite{Stein}) to extend our $\varphi$ to all of $\mathbb{R}^{2}$ so that its $C^{1,\gamma}$ norm increases at most by a universal factor $C_{\gamma}$ depending only on $\gamma$. From now on $\varphi$ and $w$ are understood to be defined on $\mathbb{R}^{2}$.\\

	What remains now is to derive a similar estimate on the second term. Since the estimate is time independent, for convenience, we will drop $t$ and use $\Omega$ to replace $\Omega(t)$ from now on.

	\begin{figure}[h]
		\includegraphics[scale=0.5]{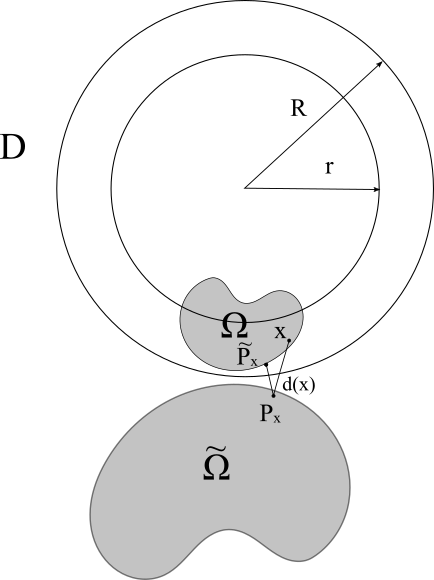}
		\caption{}
		\label{figure1}
	\end{figure}

	First we split $D$ as a smaller ball $B_{r}(0)$ and an annulus $A$, see Figure \ref{figure1}. When $x \in B_{r}(0)$ in the expression of $\nabla\widetilde v$ by (\ref{tildev}), $|x - y| \geq 1-r$, the velocity is smooth enough. When $x \in A$, the reflection over $D$ is very close to the reflection over a line, which helps avoid some complications rising from the Green's function.\\

	For later use, we choose the smaller ball to be $B_{10/11}(0))$, the annulus to be $A(0;9/10,1)$, where $A(0;9/10,1) := \{x: 9/10 \leq |x| \leq 1\}$. Here, we introduce a little overlap, because we want the estimate of $\norm{(\nabla\widetilde{v})w}_{\dot{C}^{\gamma}(\Omega \cap B_{10/11}(0))}$ and $\norm{(\nabla\widetilde{v})w}_{\dot{C}^{\gamma}(\Omega \cap A(0;9/10,1)}$ to be sufficient to bound $\norm{(\nabla\widetilde{v})w}_{\dot{C}^{\gamma}(\Omega)}$. \\

	For the easy part, we have 
	\begin{tabbing}
		$\norm{(\nabla\widetilde{v})w}_{\dot{C}^{\gamma}(\Omega \cap B_{10/11}(0))}$ \= $\leq$ $\norm{\nabla\widetilde{v}(\cdot,t)}_{L^{\infty}(B_{10/11}(0))}\norm{w(\cdot,t)}_{\dot{C}^{\gamma}(B_{10/11}(0))}$\\
		\> $+$ $\norm{w(\cdot,t)}_{L^{\infty}(B_{10/11}(0))}\norm{\nabla\widetilde{v}(\cdot,t)}_{\dot{C}^{\gamma}(B_{10/11}(0))}$\\
		\> $\leq$ $C_{\gamma}(A_{\gamma} + A_{\infty}).$
	\end{tabbing}    
	the above estimate can be obtained by absolute value bounds, since the velocity away from the boundary of $D$ is smooth.\\
	
	To deal with the second part, we first define $\widetilde{\varphi}(x)$ and $\widetilde{w}(x)$ as follows: 
	\begin{equation}\label{tildevarphi}
	\widetilde{\varphi}(x) := \varphi(\widetilde{x}), \quad \widetilde{x} = \frac{x}{|x|^{2}},
	\end{equation}
	\begin{equation}\label{definitionofw}
	\widetilde{w}(x) := \nabla^{\perp}\widetilde{\varphi}(x) = \begin{pmatrix}
	\dfrac{x_{1}^{2}-x_{2}^{2}}{|x|^{4}} &
	\dfrac{2x_{1}x_{2}}{|x|^{4}}\\
	\dfrac{2x_{1}x_{2}}{|x|^{4}} &
	\dfrac{x_{2}^{2}-x_{1}^{2}}{|x|^{4}} \\
	\end{pmatrix}
	\begin{pmatrix}
	w_{1}(\widetilde{x}) \\
	w_{2}(\widetilde{x})
	\end{pmatrix}.
	\end{equation}
	
	Here we first list the proposition that we will prove below.
	
	\begin{prop}\label{anulusestimate}
		Let $\varphi$, $\widetilde{v}$, $\widetilde{w}$, $w$, $A_{\gamma}$, $A_{\inf}$ be as above. Then we have
		\[
		\norm{(\nabla\widetilde{v})w}_{\dot{C}^{\gamma}(\Omega \cap A(0;9/10,1))} \leq C_{\gamma}A_{\gamma}\Big(1+\log_{+}\frac{A_{\gamma}(t)}{A_{\inf}(t)} \Big).
		\]
	\end{prop}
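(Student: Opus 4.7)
The plan is to mimic the Bertozzi--Constantin proof of Corollary 1 in \cite{Constantin} for the reflected velocity $\widetilde{v}$, pairing it with the reflected field $\widetilde{w}$ introduced in (\ref{definitionofw}). The key property of $\widetilde{w}$ is that, by construction, $\widetilde{\varphi}$ vanishes on $\partial \widetilde{\Omega}$ (since $\varphi = 0$ on $\partial \Omega$), so $\widetilde{w} = \nabla^{\perp} \widetilde{\varphi}$ is tangent to $\partial \widetilde{\Omega}$. This tangency, relative to the domain of integration $\widetilde{\Omega}$ appearing in (\ref{tildev}) for $\nabla \widetilde{v}$, is the natural substitute for the tangency of $w$ to $\partial \Omega$ used in the flat-space argument.

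First, I would write
\[
(\nabla \widetilde{v})(x)\, w(x) \;=\; (\nabla \widetilde{v})(x)\, \widetilde{w}(x) \;+\; (\nabla \widetilde{v})(x)\bigl[w(x) - \widetilde{w}(x)\bigr]
\]
and handle the two pieces separately. For the principal term $(\nabla \widetilde{v})\widetilde{w}$, I would rerun the Bertozzi--Constantin argument with the triple $(v, \Omega, w)$ replaced by $(\widetilde{v}, \widetilde{\Omega}, \widetilde{w})$. Specifically, the tangency of $\widetilde{w}$ to $\partial \widetilde{\Omega}$ lets one add and subtract $\widetilde{w}(y)$ inside the principal-value integral in (\ref{tildev}), splitting it into an absolutely convergent piece controlled by $\norm{\nabla \widetilde{v}}_{L^{\infty}} \norm{\widetilde{w}}_{\dot{C}^{\gamma}}$ and a boundary integral whose integrand, after using tangency, is uniformly bounded. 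The extra weight $1/|y|^{4}$ is smooth and bounded above and below on $\widetilde{\Omega}$ (where $|y| \geq 1$), so the cancellation structure carries through with only harmless weight corrections. Combined with Proposition \ref{gradient o fvelocity} and a chain-rule estimate $\norm{\widetilde{w}}_{\dot{C}^{\gamma}} \leq C_{\gamma}(A_{\gamma} + A_{\infty})$, which follows since $x \mapsto \widetilde{x}$ and the matrix in (\ref{definitionofw}) are smooth on $A(0;9/10,1)$ with bounded derivatives, this gives
\[
\norm{(\nabla \widetilde{v})\, \widetilde{w}}_{\dot{C}^{\gamma}(\Omega \cap A(0;9/10,1))} \;\leq\; C_{\gamma}\, A_{\gamma}\Bigl(1 + \log_{+} \tfrac{A_{\gamma}}{A_{\inf}} \Bigr).
\]

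For the remainder $(\nabla \widetilde{v})(w - \widetilde{w})$, I would estimate $w - \widetilde{w}$ directly using (\ref{definitionofw}). Since $x \mapsto \widetilde{x}$ and the matrix $M(x)$ are smooth on the annulus, a standard computation gives $\norm{w - \widetilde{w}}_{L^{\infty}} + \norm{w - \widetilde{w}}_{\dot{C}^{\gamma}} \leq C_{\gamma}(A_{\gamma} + A_{\infty})$. The H\"older product rule together with the $L^{\infty}$ bound on $\nabla \widetilde{v}$ from Proposition \ref{gradient o fvelocity} controls $\norm{\nabla \widetilde{v}}_{L^{\infty}}\norm{w - \widetilde{w}}_{\dot{C}^{\gamma}}$. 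The term $\norm{\nabla \widetilde{v}}_{\dot{C}^{\gamma}}\norm{w-\widetilde{w}}_{L^{\infty}}$ is more delicate because $\nabla \widetilde{v}$ does not lie in $\dot{C}^{\gamma}$ a priori; the remedy is to reapply the Bertozzi--Constantin-type cancellation after a further decomposition of $w - \widetilde{w}$ using the fact that $M(x)$ reduces to a reflection on $\partial D$ (so that, restricted to $\partial D$, $w - \widetilde{w}$ lies entirely in the tangential direction), which gives the additional structure needed to absorb the bound into $C_{\gamma} A_{\gamma}(1 + \log_{+} A_{\gamma}/A_{\inf})$.

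The principal obstacle is carrying out this Bertozzi--Constantin cancellation in the first piece, where the singular integral is taken over $\widetilde{\Omega} \subset \mathbb{R}^{2} \setminus D$ while the H\"older seminorm is evaluated on the disjoint set $\Omega \cap A(0;9/10,1) \subset D$. When $\partial \Omega$ touches $\partial D$, the distance from $x \in \Omega \cap A$ to the support of the integrand shrinks to zero, and one must check that the cancellation structure survives uniformly in this limit. The definition (\ref{definitionofw}) of $\widetilde{w}$ is engineered precisely so that this transplantation of tangency from $\partial \Omega$ to $\partial \widetilde{\Omega}$ is clean on the annulus, where the inversion $x \mapsto \widetilde{x}$ is smooth and approximates the Euclidean reflection across the tangent line to $\partial D$.
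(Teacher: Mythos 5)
Your decomposition $(\nabla\widetilde v)w=(\nabla\widetilde v)\widetilde w+(\nabla\widetilde v)(w-\widetilde w)$ and the plan to run the Bertozzi--Constantin cancellation on the first piece is a genuinely different strategy from what the paper does, and as written it has several real gaps.

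The paper never attempts a Bertozzi--Constantin-type cancellation across the reflection. Instead it exploits the strict positivity of $d(x)=\mathrm{dist}(x,\widetilde\Omega)$ for $x\in\Omega\cap A$: it isolates $|w(x)|$ (rather than re-expressing $w$ via $\widetilde w$), bounds it by $|\widetilde w(P_x)|+C_\gamma A_\gamma d(x)^\gamma$, and then establishes the dichotomy
$\frac{|\nabla\widetilde v(x)-\nabla\widetilde v(y)|}{|x-y|^\gamma}\leq C_\gamma(1+\log_+\tfrac{A_\gamma}{A_{\inf}})\min\{A_\gamma/|\widetilde w(P_x)|,\,d(x)^{-\gamma}\}$.
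The proof of that dichotomy is the real content: a detour-path integration of $\nabla^2\widetilde v$ (Lemma \ref{dgamma}), the geometric flatness lemma imported from \cite{KRYZ} (Lemma \ref{geometriclemma}), a $\nabla^2\widetilde v$ bound measured against the flatness scale $r_x$ (Lemma \ref{secondgradient}), and a comparison of $\widetilde\Omega$ with an osculating ball whose contribution is controlled by a rotational-symmetry calculation (Lemma \ref{symmetricmagic}, needed precisely to handle the $1/|y|^4$ weight). None of this appears in your sketch.

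Concretely, the gaps in your proposal are: (i) Corollary 1 of \cite{Constantin} is proved for constant-density patches; the weight $1/|y|^4$ turns $\widetilde\omega$ into $|y|^{-4}\chi_{\widetilde\Omega}$, and you assert without proof that the cancellation ``carries through with only harmless weight corrections'' --- the paper devotes Lemma \ref{symmetricmagic} and the accompanying Taylor-splitting of $f$ precisely to handle this. (ii) You yourself flag the ``principal obstacle'' that the seminorm is taken over $\Omega\cap A$, disjoint from $\widetilde\Omega$, while the cancellation needs the vector field to be tangent to $\partial\widetilde\Omega$ near the singular set; flagging the obstacle is not resolving it, and the assertion that the definition of $\widetilde w$ ``is engineered'' to make it clean does not substitute for an argument. (iii) The remainder term $(\nabla\widetilde v)(w-\widetilde w)$ requires controlling $\|\nabla\widetilde v\|_{\dot C^\gamma}$, which you correctly note is not available; the ``further decomposition'' you then invoke is left entirely unspecified and is of essentially the same difficulty as the original problem. (iv) Your own chain gives $\|\widetilde w\|_{\dot C^\gamma}\leq C_\gamma(A_\gamma+A_\infty)$, so your stated conclusion for the first piece should read $C_\gamma(A_\gamma+A_\infty)(1+\log_+\tfrac{A_\gamma}{A_{\inf}})$, not $C_\gamma A_\gamma(1+\log_+\tfrac{A_\gamma}{A_{\inf}})$; while this discrepancy is not fatal for the final double-exponential conclusion, it does not match the Proposition as stated, and it reveals that the stated bound has not actually been derived.
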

	
	Let us introduce some notations first. See Figure \ref{figure1}, for any $x \in \mathbb{R}^{2}\setminus\widetilde{\Omega}$, define $d(x) := \text{dist}(x,\widetilde{\Omega})$. Let $P_{x} \in \partial\widetilde{\Omega}$ be the point such that $d(x) = \text{dist}(x,P_{x})$ (if there are multiple such points, we pick any one of them), and let $\widetilde{P}_{x}$ be the reflection point of $P_{x}$ over the boundary of $D$.\\

	For any two points $x, y \in \Omega\cap A(0;9/10,1)$, first note that $d(x), d(y) \leq 1/4$. Then we assume, without loss of generality, that $d(x) \leq d(y)$. With $g := (\nabla\widetilde{v})w$, we have
	
	\begin{equation}\label{govergamma}
	\begin{split}
		 \dfrac{|g(x)-g(y)|}{|x-y|^{\gamma}} & \leq |\nabla\widetilde{v}(y)|\norm{w}_{\dot{C}^{\gamma}(\Omega\cap A(0;9/10,1))}\\
		& \quad + \dfrac{|\nabla\widetilde{v}(x) - \nabla\widetilde{v}(y)}{|x-y|^{\gamma}}|w(x)|.
	\end{split}
	\end{equation}

	The first term on the right hand side is bounded by $C_{\gamma}A_{\gamma}\Big(1+\log_{+}\dfrac{A_{\gamma}(t)}{A_{\inf}(t)} \Big)$ due to Proposition \ref{gradient o fvelocity}. So we only need to bound the second term.\\
	
	Note that
	\begin{equation}\label{omegapx}
	\begin{split}
		|w(x)| & \leq |w(\widetilde{P}_{x})|+|w(\widetilde{P}_{x})-w(x)|\\
		& \leq |w(\widetilde{P}_{x})| + C_{\gamma}A_{\gamma}d(x)^{\gamma}.
	\end{split}
	\end{equation}
	
	This is because $d(x) \geq \text{dist}(P_{x}, D)$, and $x \in \Omega\cap A(0;9/10,1)$ implies that $\text{dist}(P_{x},D)$ and $\text{dist}(\widetilde{P}_{x},D)$ are comparable. Therefore we can find a universal constant $C > 0$, such that $|P_{x} - \widetilde{P}_{x}| \leq Cd(x)$, $|x - \widetilde{P}_{x}| \leq Cd(x)$.\\
	
	Next we are going to estimate $\dfrac{|\nabla\widetilde{v}(x) - \nabla\widetilde{v}(y)|}{|x-y|^{\gamma}}$ in (\ref{govergamma}). The following are Propositions and Lemmas that we need.
	
	\begin{prop}\label{T2estimate}
		Under the assumption of Proposition \ref{anulusestimate}, for $x,y\in\Omega\cap A(0;9/10,1)$, with $d(x) \leq d(y)$. Then,
		\[
		\frac{|\nabla\widetilde{v}(x) - \nabla\widetilde{v}(y)|}{|x-y|^{\gamma}} \leq C_{\gamma}\Big(1+\log_{+}\frac{A_{\gamma}}{A_{\inf}}\Big)\min\Big\{\frac{A_{\gamma}}{|\widetilde{w}(P_{x})|}, d(x)^{-\gamma}\Big\}.
		\]
	\end{prop}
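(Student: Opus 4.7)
The plan is to establish the two bounds in the minimum separately. Throughout, observe that $x \in \Omega \subset D$ while $\widetilde{\Omega} \subset D^{c}$, so the indicator contribution in \eqref{tildev} vanishes at $x$ and the kernel $\sigma(x-z)/|x-z|^{2}|z|^{-4}$ is non-singular on the domain of integration ($|x-z| \geq d(x) > 0$ for every $z \in \widetilde{\Omega}$), so no principal value is required.

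For the bound $d(x)^{-\gamma}$, I split according to the size of $|x-y|$ relative to $d(x)$. If $|x-y| \geq d(x)/2$, the $L^{\infty}$ bound on $\nabla\widetilde{v}$ derived in the proof of Proposition \ref{gradient o fvelocity} yields $|\nabla\widetilde{v}(x) - \nabla\widetilde{v}(y)| \leq 2C_{\gamma}(1+\log_{+}(A_{\gamma}/A_{\inf}))$, and dividing by $|x-y|^{\gamma} \geq (d(x)/2)^{\gamma}$ gives the claim. If $|x-y| < d(x)/2$, then $d(\xi) \geq d(x)/2$ for every $\xi$ on the segment $[x,y]$. Differentiating under the integral produces a kernel bounded by $|\xi-z|^{-3}$; since $|z| \geq 1$ on $\widetilde{\Omega}$, the weight $|z|^{-4}$ is harmless, and integration in polar coordinates yields $|\nabla^{2}\widetilde{v}(\xi)| \leq C/d(x)$. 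The mean value theorem together with $|x-y|^{1-\gamma} \leq d(x)^{1-\gamma}$ then produces the desired estimate.

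For the bound $A_{\gamma}/|\widetilde{w}(P_{x})|$, I adapt the Bertozzi--Constantin template to the reflected patch $\widetilde{\Omega}$, whose unit tangent along $\partial\widetilde{\Omega}$ is $\widetilde{w}/|\widetilde{w}|$. Choose a scale $\delta$ and split the integral representing $\nabla\widetilde{v}(x) - \nabla\widetilde{v}(y)$ into the far region $|x-z| > \delta$ and the near region $|x-z| \leq \delta$. On the far region the mean value bound $|K(x,z) - K(y,z)| \leq C|x-y|/|x-z|^{3}$ integrated against $|z|^{-4}$ contributes a term of order $|x-y|/\delta$. On the near region I parametrize $\partial\widetilde{\Omega}$ around $P_{x}$ by arclength along $\widetilde{w}(P_{x})/|\widetilde{w}(P_{x})|$; to leading order $\widetilde{\Omega}$ is a half-plane whose contribution vanishes by cancellation of $\sigma$, and the residual error is controlled by the H\"{o}lder modulus of the unit tangent along $\partial\widetilde{\Omega}$, which is $\norm{\widetilde{w}}_{\dot{C}^{\gamma}(\widetilde{\Omega})}/|\widetilde{w}(P_{x})|$; via the quasi-isometry of the inversion $y \mapsto y/|y|^{2}$ on the annulus $9/10 \leq |y| \leq 1$, this is comparable to $A_{\gamma}/|\widetilde{w}(P_{x})|$. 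Dividing by $|x-y|^{\gamma}$ and optimizing $\delta^{\gamma} = A_{\inf}/A_{\gamma}$, as in the proof of Proposition \ref{gradient o fvelocity}, produces the logarithmic factor.

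The main obstacle is the near-region analysis of the second bound, where the factor $|\widetilde{w}(P_{x})|$ must be tracked carefully through the Jacobian of the arclength parametrization of $\partial\widetilde{\Omega}$ and the H\"{o}lder modulus of the unit normal must be bounded by $A_{\gamma}/|\widetilde{w}(P_{x})|$ uniformly in the annulus $A(0;9/10,1)$. Once this is in hand, taking the minimum of the two bounds completes the proof.
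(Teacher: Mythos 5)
Your treatment of the $d(x)^{-\gamma}$ branch is essentially sound: for $|x-y|\lesssim d(x)$ the straight-segment mean-value argument with $|\nabla^{2}\widetilde{v}(\xi)|\leq C/d(\xi)$ works, and for $|x-y|\gtrsim d(x)$ the $L^{\infty}$ bound from Proposition \ref{gradient o fvelocity} divided by $|x-y|^{\gamma}\gtrsim d(x)^{\gamma}$ picks up an extra logarithmic factor, which the proposition tolerates. (The paper's Lemma \ref{dgamma} is sharper --- it avoids the log by integrating along the in-going path $x\to Q_{1}\to Q_{2}\to y$ --- but your weaker bound suffices here.) You correctly note that $x\in\Omega$ lies outside $\widetilde{\Omega}$, so the kernel is nonsingular on the domain of integration.

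The $A_{\gamma}/|\widetilde{w}(P_{x})|$ branch, however, has a genuine gap, and you in fact flag it yourself without filling it. The Bertozzi--Constantin kernel-splitting template with a single global scale $\delta^{\gamma}=A_{\inf}/A_{\gamma}$ yields the \emph{global} quantity $A_{\gamma}/A_{\inf}$, not the \emph{local} one $A_{\gamma}/|\widetilde{w}(P_{x})|$. Your step ``the residual error is controlled by the H\"{o}lder modulus of the unit tangent along $\partial\widetilde{\Omega}$, which is $\norm{\widetilde{w}}_{\dot{C}^{\gamma}}/|\widetilde{w}(P_{x})|$'' is the crux, and it is not true at all scales: that quotient controls the modulus of the unit tangent only inside the ball $B_{r_{x}}(P_{x})$ with $r_{x}=\big(|\widetilde{w}(P_{x})|/2A_{\gamma}\big)^{1/\gamma}$, since beyond that scale $|\widetilde{w}|$ can degenerate well below $|\widetilde{w}(P_{x})|$. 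This local nondegeneracy scale $r_{x}$ --- and the case distinction $d(x)\lessgtr r_{x}$ --- is what the whole estimate hinges on, and your sketch never introduces it. The paper handles exactly this point via Lemma \ref{geometriclemma} (the boundary $\partial\widetilde{\Omega}$ is trapped in a thin parabolic sector $S_{x}$ inside $B_{r_{x}}(P_{x})$), which feeds into the pointwise second-derivative estimate $|\nabla^{2}\widetilde{v}(x)|\leq C_{\gamma}d(x)^{-1+\gamma}r_{x}^{-\gamma}$ of Lemma \ref{secondgradient} (after comparing $\widetilde{\Omega}$ near $P_{x}$ to an osculating disk $\widetilde{B}_{x}$), and then into a path integration in Lemma \ref{logestimate} under the hypothesis $d(x)\leq 2^{-4-1/\gamma}r_{x}$. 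Without some substitute for this machinery --- localizing your near-field analysis to scale $\sim r_{x}$, proving the half-plane cancellation there with an error that sees only the \emph{local} flatness, and justifying why points on your integration path inherit a comparable $r_{z}$ --- the second branch of the minimum is asserted rather than proved.
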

	
	\begin{lemma}\label{dgamma}
		For $x,y\in\Omega\cap A(0;9/10,1)$, with $d(x) \leq d(y)$. We have (with a universal constant $C < \infty$)
		\[
		\frac{|\nabla\widetilde{v}(x) - \nabla\widetilde{v}(y)|}{|x-y|^{\gamma}} \leq C_{\gamma}d(x)^{-\gamma}.
		\]
	\end{lemma}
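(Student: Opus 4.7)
The plan is to bound the difference $\nabla\widetilde{v}(x)-\nabla\widetilde{v}(y)$ directly from the formula (\ref{tildev}). A key observation is that since $x,y\in\Omega\subset D$ while $\widetilde{\Omega}\subset\mathbb{R}^{2}\setminus D$, the pointwise term $\frac{1}{2|x|^{4}}\chi_{\widetilde{\Omega}}(x)(\cdots)$ vanishes at both $x$ and $y$, so only the integral piece
\[
H(x):=\frac{1}{2\pi}\int_{\widetilde{\Omega}}\frac{\sigma(x-z)}{|x-z|^{2}}\frac{1}{|z|^{4}}\,dz
\]
contributes; furthermore, since $x$ and $z$ lie on opposite sides of $\partial D$, the integrand is non-singular for every $z$ in the domain of integration, with $|x-z|\geq d(x)$. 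I would use the size bounds $|\sigma(x-z)|/|x-z|^{2}\le C/|x-z|^{2}$ and $|\nabla_{x}(\sigma(x-z)/|x-z|^{2})|\le C/|x-z|^{3}$, and split into two regimes depending on how $|x-y|$ compares to $d(x)$.

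In the first regime $|x-y|<d(x)/2$, the entire segment from $x$ to $y$ stays at distance $\ge|x-z|/2$ from any $z\in\widetilde{\Omega}$, so the mean value theorem yields
\[
\Big|\tfrac{\sigma(x-z)}{|x-z|^{2}}-\tfrac{\sigma(y-z)}{|y-z|^{2}}\Big|\le C\frac{|x-y|}{|x-z|^{3}}.
\]
Integrating in polar coordinates around $x$ with $r=|x-z|$ running from $d(x)$ (and splitting at $|z|=2$ so the tail is controlled by the $|z|^{-4}$ decay) gives $|H(x)-H(y)|\le C|x-y|/d(x)$. Dividing by $|x-y|^{\gamma}$ and using $|x-y|\le d(x)/2$ yields $C_{\gamma}d(x)^{-\gamma}$, as required.

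In the second regime $|x-y|\ge d(x)/2$, I would cut the integration domain at $|x-z|=2|x-y|$. On the far part, the same mean value bound applies, but now the polar integral starts from $2|x-y|$ rather than $d(x)$, producing an $O(1)$ contribution. On the near part, a crude absolute value bound gives
\[
\int_{\{d(x)\le|x-z|\le 2|x-y|\}}\frac{C}{|x-z|^{2}}\,dz\le C\log\!\big(|x-y|/d(x)\big);
\]
the analogous estimate with $y$ in place of $x$ is identical. Hence $|H(x)-H(y)|\le C(1+\log(|x-y|/d(x)))$. Dividing by $|x-y|^{\gamma}$ and writing $t:=|x-y|/d(x)\ge 1/2$, the desired bound follows from the elementary fact that $(1+\log t)/t^{\gamma}$ is bounded on $[1/2,\infty)$ by some $C_{\gamma}$.

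The only place care is needed is the book-keeping for the tail at infinity: one must exploit the $|z|^{-4}$ weight so that the polar-coordinate computations around $x$ (which ignore the weight) are justified on a compact region $\{|z|\le 2\}$, while the region $\{|z|>2\}$ contributes only a bounded quantity by direct integration. Once this split is in place, both regimes reduce to elementary estimates, and there is no genuine logarithmic loss — the $\log$ from Case~2 is absorbed by the factor $(d(x)/|x-y|)^{\gamma}\le 1$, giving the clean $C_{\gamma}d(x)^{-\gamma}$ bound in the statement.
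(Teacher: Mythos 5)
Your argument is correct, and it reaches the same $C_\gamma(1+\log(|x-y|/d(x)))$ intermediate bound and the same final absorption step ($1+\log t \leq C_\gamma t^\gamma$) as the paper, but by a genuinely different implementation. The paper first proves a pointwise bound $|\nabla^2\widetilde{v}(Z)| \leq Cd(Z)^{-1}$ and then integrates this along an auxiliary polygonal path $x \to Q_1 \to Q_2 \to y$, where $Q_1,Q_2$ are pushed inward along the radii through $x$ and $y$ to distance $2|x-y|$; the favorable bound on the middle leg $Q_1Q_2$ (which sits at distance $\gtrsim |x-y|$ from $\widetilde{\Omega}$) is what yields the $O(1)$ contribution, while the two radial legs give the $\log$ term. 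To make this work the paper also reduces to $|x-y| \leq 1/4$ by inserting intermediate points. You instead keep the straight segment $xy$, apply the mean value theorem directly to the kernel $\sigma(\cdot-z)/|\cdot-z|^2$, and split the $z$-integration into the annular regimes $|x-z| \lesssim |x-y|$ (handled by crude absolute bounds, giving the $\log$) and $|x-z| \gtrsim |x-y|$ (handled by the MVT cancellation, giving $O(1)$), using the weight $|z|^{-4}$ only to control the tail $|z| > 2$. This is a cleaner route: it avoids introducing $Q_1,Q_2$, does not require the auxiliary reduction to $|x-y| \leq 1/4$, and never needs the pointwise $\nabla^2\widetilde{v}$ estimate. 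In essence, the paper's path integral of the second derivative and your domain-split MVT are two ways of organizing the same cancellation, and both rest on the same geometric input — that $|x-z| \geq d(x)$ for $z \in \widetilde{\Omega}$ and that the boundary term in \eqref{tildev} vanishes for $x \in D$.
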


	\begin{lemma}\label{logestimate}
		For $x,y\in\Omega\cap A(0;9/10,1)$, with $d(x) \leq \min\{d(y),2^{-4-1/\gamma}r_{x}\}$, where $r_{x} := \Big(\dfrac{|\widetilde{w}(P_{x})|}{2A_{\gamma}}\Big)^{\frac{1}{\gamma}}$. We have (with a constant $C_{\gamma}$ only depending on $\gamma$)
		\[
		\frac{|\nabla\widetilde{v}(x) - \nabla\widetilde{v}(y)|}{|x-y|^{\gamma}} \leq C_{\gamma}\Big(1+\log_{+}\frac{A_{\gamma}}{A_{\inf}}\Big)\frac{A_{\gamma}}{|\widetilde{w}(P_{x})|}.
		\]
	\end{lemma}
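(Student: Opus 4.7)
The approach mirrors the classical Bertozzi--Constantin boundary regularity estimate for a Calder\'on--Zygmund integral over a $C^{1,\gamma}$ patch, adapted here to the reflected patch $\widetilde{\Omega}$ and the weight $|y|^{-4}$. Since both $x$ and $y$ lie inside $D$ while $\widetilde{\Omega}\subset\mathbb{R}^{2}\setminus\overline{D}$, the characteristic-function term in (\ref{tildev}) vanishes at both points and only the principal value integral
\[
T(x) := \frac{1}{2\pi}\,\mathrm{p.v.}\!\int_{\widetilde{\Omega}}\frac{\sigma(x-y)}{|x-y|^{2}}\frac{1}{|y|^{4}}\,dy
\]
contributes to $\nabla\widetilde{v}(x)-\nabla\widetilde{v}(y)$. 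Because $|y|\geq 1$ on $\widetilde{\Omega}$, the weight $|y|^{-4}$ and all its derivatives are uniformly bounded, so I will treat it as a smooth bounded multiplier; this reduces matters to a H\"older estimate for the standard singular kernel $\sigma(x-y)/|x-y|^{2}$ against a smoothly-weighted version of $\chi_{\widetilde{\Omega}}$.

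\textbf{Controlling the boundary near $P_{x}$.} The scale $r_{x}=(|\widetilde{w}(P_{x})|/(2A_{\gamma}))^{1/\gamma}$ is chosen so that H\"older continuity of $\widetilde{w}$ (whose $\dot{C}^{\gamma}$ norm is bounded by $C_{\gamma}A_{\gamma}$, via the smooth conjugating matrix in (\ref{definitionofw}) together with $|y|\geq 1$) forces $|\widetilde{w}(\xi)|\geq |\widetilde{w}(P_{x})|/2$ on $B_{r_{x}}(P_{x})$. Since $\widetilde{w}$ is tangent to $\partial\widetilde{\Omega}$, this yields an arc-length parametrization $\zeta$ of $\partial\widetilde{\Omega}\cap B_{r_{x}}(P_{x})$ with $\|\zeta'\|_{\dot{C}^{\gamma}}\leq C_{\gamma} A_{\gamma}/|\widetilde{w}(P_{x})|$. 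The hypothesis $d(x)\leq 2^{-4-1/\gamma}r_{x}$ combined with $d(x)\leq d(y)$ guarantees that all relevant small-scale contributions to $T(x)-T(y)$ come from this well-parametrized piece of $\partial\widetilde{\Omega}$.

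\textbf{Two-scale split producing the logarithm.} I would estimate $T(x)-T(y)$ by splitting at a radius $R\in[|x-y|,r_{x}]$ centred at $P_{x}$. For the outer piece $\widetilde{\Omega}\setminus B_{R}(P_{x})$, the kernel is smooth at both $x$ and $y$, and the mean value theorem together with $|\nabla_{z}(\sigma(z-\cdot)/|z-\cdot|^{2})|\lesssim|z-\cdot|^{-3}$ yields a contribution of order $|x-y|/R$. For the inner piece, I subtract the principal value against the half-plane tangent to $\partial\widetilde{\Omega}$ at $P_{x}$; by oddness of $\sigma$ this half-plane integral is zero (or an explicitly bounded constant independent of scale), and the remainder is controlled by the $C^{1,\gamma}$ deviation of $\partial\widetilde{\Omega}$ from its tangent line, giving a contribution of order $(A_{\gamma}/|\widetilde{w}(P_{x})|)|x-y|^{\gamma}$. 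Optimising $R$ over the admissible range, and using $|\widetilde{w}(P_{x})|\gtrsim A_{\inf}$ (which follows from the fact that the Jacobian of $y\mapsto y/|y|^{2}$ is bi-Lipschitz near $\partial D$), the annular integral $\int dr/r$ supplies the factor $1+\log_{+}(A_{\gamma}/A_{\inf})$, producing the stated bound.

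\textbf{Main obstacle.} The technical core is the inner estimate: one must exploit the cancellation of $\sigma(x-y)/|x-y|^{2}$ against the half-plane model finely enough that the integrated error is genuinely controlled by $A_{\gamma}/|\widetilde{w}(P_{x})|$ rather than the coarser $A_{\gamma}/A_{\inf}$. This is where the Bertozzi--Constantin bookkeeping is most delicate, and one has to verify that neither the reflection $y\mapsto y/|y|^{2}$ nor the weight $|y|^{-4}$ introduces new singular behaviour; because $|y|\geq 1$ throughout $\widetilde{\Omega}$ both are smooth perturbations with bounded derivatives, so they should cost only universal constants, and the argument of \cite{Constantin} transfers with essentially the same structure.
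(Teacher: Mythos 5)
Your overall strategy is a genuinely different route from the paper's. The paper does \emph{not} estimate the difference $\nabla\widetilde{v}(x)-\nabla\widetilde{v}(y)$ directly by a Bertozzi--Constantin-style two-scale/half-plane cancellation. Instead it splits into two regimes. When $|x-y|\geq 2^{-4-1/\gamma}r_{x}$ it simply writes $|\nabla\widetilde{v}(x)-\nabla\widetilde{v}(y)|\leq 2\|\nabla\widetilde{v}\|_{L^{\infty}}$, invokes the already-proved $L^\infty$ bound of Proposition~\ref{gradient o fvelocity} (this is the only place the factor $1+\log_{+}(A_\gamma/A_{\inf})$ enters), and converts $|x-y|^{-\gamma}$ into $C_\gamma A_\gamma/|\widetilde{w}(P_x)|$ using $|x-y|\gtrsim r_x$. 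When $|x-y|<2^{-4-1/\gamma}r_{x}$ it passes from the difference to a line integral of $\nabla^{2}\widetilde{v}$ along a polygonal path $x\to Q_1\to Q_2\to y$ chosen so that every intermediate point $z_i(s)$ has $P_{z_i(s)}\in B_{r_x}(P_x)$ and hence $r_{z_i(s)}\gtrsim r_x$ and $d(z_i(s))\leq\frac14 r_{z_i(s)}$; then the pointwise bound of Lemma~\ref{secondgradient}, $|\nabla^{2}\widetilde{v}(z)|\leq C_\gamma d(z)^{-1+\gamma}r_z^{-\gamma}$, integrates to $C_\gamma r_x^{-\gamma}\leq C_\gamma A_\gamma/|\widetilde{w}(P_x)|$ with \emph{no} logarithm at all. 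The cancellation you envision against a tangent half-plane is, in the paper, carried out once-and-for-all inside Lemma~\ref{secondgradient}, by comparing $\widetilde\Omega$ to a tangent \emph{ball} $\widetilde{B}_x$ of radius $r_x$ and computing $\nabla^{2}u_{\widetilde{B}_x}$ explicitly by rotational symmetry (Lemma~\ref{symmetricmagic}).

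There are two concrete gaps in your proposal. First, you only treat $R\in[|x-y|,r_x]$, i.e.\ implicitly assume $|x-y|\leq r_x$; but under the hypothesis $d(x)\leq 2^{-4-1/\gamma}r_x$, the distance $|x-y|$ can still be as large as $\mathrm{diam}(D)$, and you have not said what to do then. This is precisely the branch the paper handles via the $L^\infty$ estimate, and it is the only source of the $\log_{+}(A_\gamma/A_{\inf})$ factor. Second, your announced mechanism for producing the logarithm is not coherent: a single-scale split optimized in $R$ (set $R\sim|x-y|^{1-\gamma}r_x^{\gamma}$) gives $C_\gamma A_\gamma/|\widetilde{w}(P_x)|$ with no log, while a dyadic sum over scales would most naturally produce $\log(r_x/|x-y|)$, which is unbounded and not comparable to $\log(A_\gamma/A_{\inf})$. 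You cannot get the stated quantity from that integral, and in fact you do not need to: the sharp small-$|x-y|$ bound has no logarithm. The ``main obstacle'' you flag --- quantifying the cancellation near $P_x$ at scale $r_x$ --- is indeed the technical core, and the proposal leaves it entirely unproved; in the paper it is resolved by Lemma~\ref{geometriclemma} (geometric flatness of $\partial\widetilde{\Omega}$ inside $B_{r_x}(P_x)$) together with the explicit tangent-ball computation of Lemma~\ref{symmetricmagic}.
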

	
	First, we prove Proposition \ref{anulusestimate} by using Proposition \ref{T2estimate}, Lemma \ref{dgamma} and Lemma \ref{logestimate}.

	\begin{proof}[Proof of Proposition \ref{anulusestimate}]
		By (\ref{omegapx}), we know $|w(x)| \leq |w(\widetilde{P}_{x})| + C_{\gamma}A_{\gamma}d(x)^{\gamma}$. By definition (\ref{definitionofw}), we know that for $x \in \Omega\cap A(0;9/10,1)$, $|w(\widetilde{P}_{x})| \leq C|\widetilde{w}(P_{x})|$, where $C$ is a universal constant. Together with Proposition \ref{T2estimate}, we have,
		\begin{tabbing}
			$\dfrac{|\nabla\widetilde{v}(x) - \nabla\widetilde{v}(y)}{|x-y|^{\gamma}}|w(x)|$ \= $\leq$  $C_{\gamma}A_{\gamma}d(x)^{\gamma}\Big(1+\log_{+}\dfrac{A_{\gamma}}{A_{\inf}}\Big)\min\Big\{\dfrac{A_{\gamma}}{|\widetilde{w}(P_{x})|}, d(x)^{-\gamma}\Big\}$\\
			\> $\quad$ $+$ $|\widetilde{w}(P_{x})|\Big(1+\log_{+}\dfrac{A_{\gamma}}{A_{\inf}}\Big)\min\Big\{\dfrac{A_{\gamma}}{|\widetilde{w}(P_{x})|}, d(x)^{-\gamma}\Big\}$\\
			\> $\leq$ $C_{\gamma}A_{\gamma}\Big(1+\log_{+}\dfrac{A_{\gamma}}{A_{\inf}}\Big).$
		\end{tabbing}
		
	\end{proof}

	Next, we prove the proposition \ref{T2estimate} by using Lemma \ref{dgamma} and Lemma \ref{logestimate}.

	\begin{proof}[Proof of Proposition \ref{T2estimate}]
		Due to Lemma \ref{dgamma}, we only need to consider the case where
		\[
		d(x) \leq \Big(\frac{|\widetilde{w}(P_{x})|}{A_{\gamma}}\Big)^{\frac{1}{\gamma}}.
		\]
		Lemma \ref{logestimate} completes the proof.
	\end{proof}

	Next, we prove Lemma \ref{dgamma} and Lemma \ref{logestimate}.

	\begin{proof}[Proof of Lemma \ref{dgamma}]
		By mean value theorem,
		\[
		\frac{|\nabla\widetilde{v}(x) - \nabla\widetilde{v}(y)|}{|x-y|^{\gamma}} \leq |\nabla^{2}\widetilde{v}(z)||x-y|^{1-\gamma}
		\]
		for some point $z$ on the segment connecting $x$ and $y$. Let $d_{x}$ be the distance between $x$ and $\partial D$. By definition, we always have $d(x) \geq d_{x}$. Since $x,y\in\Omega\cap A(0;9/10,1)$, we have (with a universal constant $C < \infty$) 
		
		\begin{equation}\label{relation1}
		\begin{split}
		d_{x} & \leq d(x) \leq Cd_{x},\\
		d_{y} & \leq d(y) \leq Cd_{y},\\
		 &  d_{z} \leq d(z).
		\end{split}
		\end{equation}
		So
		\begin{equation}\label{relation3}
		d(z) \geq d_{z} \geq \min\{d_{x},d_{y}\} \geq C\min\{d(x),d(y)\} \geq Cd(x).
		\end{equation}
		Moreover, for any point $Z \not\in\widetilde{\Omega}$, we have (with a universal constant $C < \infty$)
		\[
		|\nabla^{2}\widetilde{v}(Z)| \leq \int_{\mathbb{R}^{2}\setminus B_{d(Z)}(Z)}\frac{C}{|Z-x|^{3}}dx \leq Cd(Z)^{-1}.
		\]
		Putting together, we obtain
		\[
		\frac{|\nabla\widetilde{v}(x) - \nabla\widetilde{v}(y)|}{|x-y|^{\gamma}} \leq Cd(z)^{-1}|x-y|^{1-\gamma} \leq Cd(x)^{-1}|x-y|^{1-\gamma}.
		\]
		So if $|x-y| \leq d(x)$, the proof is done.\\
		
		On the other hand, we assume $|x-y| \geq d(x)$. Since $x,y\in\Omega\cap A(0;9/10,1)$, we have $d(x), d(y) \leq 1/4$. We can further assume that $|x-y| \leq 1/4$. Indeed, with $x,y \in D$, $|x -y| \leq 2$. We can always insert a finite number of points to make each two consecutive points with a distance less than $1/4$.\\

		\begin{figure}[h]
		\includegraphics[scale=0.5]{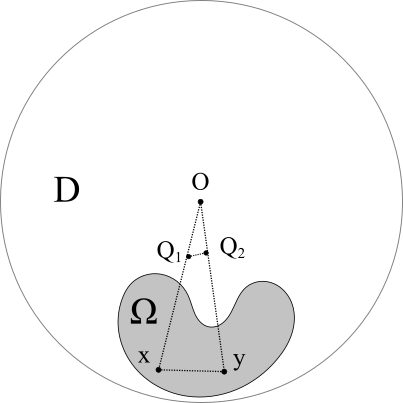}
		\caption{}
		\label{figure2}
	    \end{figure}

		Now let $Q_{1}$ be the point on the segment $Ox$ connecting point $x$ and the origin $O$ of $D$, with a distance of $2|x-y|$ from $x$, let $Q_{2}$ be the point on segment $Oy$ with a distance of $2|x-y|$ from $y$, see Figure \ref{figure2}. For $s \in [0,1]$, we have

		\begin{equation}\label{relation2}
		\begin{split}
		d(x+s(Q_{1}-x)) & \geq \max\{d(x),2s|x-y|\},\\
		d(y+s(Q_{2}-y)) & \geq \max\{d(x),2s|x-y|\},\\
		d(Q_{1} + s(Q_{2} - Q_{1})) & \geq |x - y|,\\
		|Q_{1}-Q_{2}| & \leq 3|x-y|.
		\end{split}
		\end{equation}

		Now integrating along the path $x \rightarrow Q_{1} \rightarrow Q_{2} \rightarrow y$, we have (with a universal constant $C < \infty$)
		\begin{tabbing}
			$\quad |\nabla\widetilde{v}(x) - \nabla\widetilde{v}(y)|$ \= $\leq$ \= $2\displaystyle\int_{0}^{1}|\nabla^{2}\widetilde{v}(x+s(Q_{1}-x))||x-y|ds$\\
			\> \>$+$ $\displaystyle\int_{0}^{1}|\nabla^{2}\widetilde{v}(Q_{1}+s(Q_{2}-Q_{1}))||Q_{1}-Q_{2}|ds$\\
			\> \>$+$ $2\displaystyle\int_{0}^{1}|\nabla^{2}\widetilde{v}(y+s(Q_{2}-y))||x-y|ds$\\
			\> $\leq$ \> $C|x-y|\big(\displaystyle\int_{0}^{\frac{d(x)}{|x-y|}}d(x)^{-1}ds + \displaystyle\int_{\frac{d(x)}{|x-y|}}^{1}(s|x-y|)^{-1}ds\big)$\\
			\> \> $+$ $C|x-y|^{-1}|x-y|$\\
			\>$\leq$ \> $C(1+\log\dfrac{|x-y|}{d(x)}).$\\
		\end{tabbing}
		Now we have (with a constant $C_{\gamma}$ only depending on $\gamma$)
		\[
		\frac{|\nabla\widetilde{v}(x) - \nabla\widetilde{v}(y)|}{|x-y|^{\gamma}} \leq C(1+\log\frac{|x-y|}{d})|x-y|^{-\gamma} \leq C_{\gamma}d^{-\gamma}.
		\]
		Indeed, for $a \geq 1$, we always have $1 + \log a \leq \frac{1}{\gamma}a^{\gamma}$.
	\end{proof}
	
	To prove Lemma \ref{logestimate}, we need Lemma \ref{secondgradient} stated as follows.

	\begin{lemma}\label{secondgradient}
		For any $x \in \mathbb{R}^{2}\setminus\widetilde{\Omega}$ with $d(x) \leq \frac{1}{4}r_{x}$, and $r_{x} := \Big(\dfrac{|\widetilde{w}(P_{x})|}{2A_{\gamma}}\Big)^{\frac{1}{\gamma}}$. We have 
		\[
		|\nabla^{2}\widetilde{v}(x)| \leq C_{\gamma}d(x)^{-1+\gamma}r_{x}^{-\gamma}.
		\]
	\end{lemma}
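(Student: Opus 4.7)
The plan is to split the integral representation
\[
\nabla^2\widetilde v(x)=\int_{\widetilde\Omega}K_2(x-y)\,\frac{dy}{|y|^4},\qquad |K_2(z)|\leq C|z|^{-3},
\]
at the scale $r:=r_x$ and treat the near and far regions separately. The far region $\widetilde\Omega\setminus B_r(P_x)$ is immediate: the hypothesis $d(x)\leq r/4$ yields $|x-y|\geq|y-P_x|/2$, whence the contribution is at most $C/r$, which is already dominated by $C_\gamma d(x)^{\gamma-1}r^{-\gamma}$ since $d(x)\leq r$ forces $(d(x)/r)^{1-\gamma}\leq 1$.

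For the near region, introduce local coordinates centered at $P_x$ with $\widetilde w(P_x)/|\widetilde w(P_x)|$ aligned along $e_1$, so that $x=(0,-d(x))$ and $\partial\widetilde\Omega\cap B_r$ is the graph $y_2=h(y_1)$ of some function with $h(0)=h'(0)=0$. The definition of $r_x$ guarantees $|\widetilde w(y)-\widetilde w(P_x)|\leq|\widetilde w(P_x)|/2$ on $B_r$, which gives both $|\widetilde w(y)|\geq\tfrac12|\widetilde w(P_x)|$ and $|\widetilde w_2(y)|\leq|\widetilde w(P_x)|/2$ there, hence the H\"older estimates
\[
|h'(y_1)|\leq\frac{CA_\gamma|y_1|^\gamma}{|\widetilde w(P_x)|},\qquad |h(y_1)|\leq\frac{CA_\gamma|y_1|^{1+\gamma}}{|\widetilde w(P_x)|}.
\]
Compare $\widetilde\Omega\cap B_r$ with the tangent half-plane $H=\{y_2>0\}$ and split the near integral as $\int_{H\cap B_r}+\int_{(\widetilde\Omega\triangle H)\cap B_r}$. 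On the thin symmetric-difference strip $\{|y_2|\leq|h(y_1)|\}$ one has $|x-y|^2\gtrsim y_1^2+d(x)^2$ (verified by separating the cases $|y_2|\leq d(x)/2$ and $|y_2|>d(x)/2$; the latter forces $|y_1|\geq d(x)$ because $|h(y_1)|\leq|y_1|/2$), which reduces the symmetric-difference estimate to the one-dimensional power integral
\[
\frac{CA_\gamma}{|\widetilde w(P_x)|}\int_{-r}^{r}\frac{|y_1|^{1+\gamma}}{(y_1^2+d(x)^2)^{3/2}}\,dy_1\;\leq\;\frac{C_\gamma A_\gamma}{|\widetilde w(P_x)|}d(x)^{\gamma-1}=C_\gamma d(x)^{\gamma-1}r^{-\gamma},
\]
the last equality by the definition of $r_x$.

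To finish, I would bound the half-plane piece $\int_{H\cap B_r}K_2(x-y)\,dy/|y|^4$ by $C_\gamma d(x)^{\gamma-1}r^{-\gamma}$. Writing $K_2=-\nabla_y(\sigma(x-y)/|x-y|^2)$ and integrating by parts on $H\cap B_r$, then using the identity $\sigma(z)/|z|^2=\nabla\nabla^\perp\log|z|$, the resulting line integral along $\{y_2=0,|y_1|<r\}$ reduces to elementary one-variable antiderivatives: for example the delicate piece $\int_{-r}^{r}(y_1^2-d(x)^2)/(y_1^2+d(x)^2)^2\,dy_1$ telescopes to $-2r/(r^2+d(x)^2)$, which is $O(1/r)$. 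The arc contribution on $\partial B_r\cap H$ is trivially $O(1/r)$ because $|x-y|\geq 3r/4$ there. The bulk correction from $\nabla_y(|y|^{-4})$ is handled by splitting $|y|^{-4}$ into its value at $P_x$ plus a Lipschitz perturbation and using the algebraic identity $(\sigma(z)/|z|^2)z=-z^\perp/|z|^2$, which makes $\int_{H\cap B_r}\sigma(x-y)/|x-y|^2\,dy$ bounded rather than logarithmically divergent; the perturbation part contributes $O(r)$. All remaining pieces are absorbed into $C_\gamma d(x)^{\gamma-1}r^{-\gamma}$, using that everything lives in a bounded region. The principal obstacle is precisely this half-plane estimate: a naive size bound on $K_2$ gives only $C/d(x)$, and the gain to $C/r$ hinges on the algebraic cancellation made explicit by integration by parts, paired with the $\sigma\cdot z$ identity to neutralise the variable weight $|y|^{-4}$.
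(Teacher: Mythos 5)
Your proposal is correct but takes a genuinely different technical route from the paper. Both arguments share the same skeleton: pick a smooth reference domain tangent to $\partial\widetilde\Omega$ at $P_x$, absorb the far-field into $C/r_x$, control the symmetric difference via the $C^{1,\gamma}$ regularity of $\widetilde\varphi$, and then argue that the reference domain's own contribution to $\nabla^2$ of the velocity is $O(1/r_x)$ rather than the naive $O(1/d(x))$. Where you diverge: the paper's reference domain is a tangent \emph{disk} $\widetilde B_x = B_{r_x}(P_x + r_x n_x)$, and the $O(1/r_x)$ bound for it is obtained through Lemma \ref{symmetricmagic}, which exploits rotational invariance to reduce $\nabla^\perp\Delta^{-1}\chi_{B_r}$ to an essentially exact formula via the mean-value property; your reference domain is a tangent \emph{half-plane} truncated at $B_{r_x}(P_x)$, and the $O(1/r_x)$ bound comes from integration by parts and odd-symmetry cancellation in the resulting one-variable boundary integrals along the diameter. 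Your graph estimate $|h(y_1)|\leq CA_\gamma|y_1|^{1+\gamma}/|\widetilde w(P_x)|$ is a direct reformulation of the cusp set $S_x$ in the paper's Lemma \ref{geometriclemma} (borrowed from \cite{KRYZ}), and the resulting symmetric-difference integrals are identical, so that part is equivalent. The paper's disk is slightly cleaner since it has a single circular boundary, avoiding the artificial truncation boundary that your half-plane acquires, but your route has the advantage of bypassing the mean-value computation entirely. One place where your write-up is terser than it should be: the claim that the perturbation of $|y|^{-4}$ in the diameter line integral contributes only $O(r)$ really does require first expanding $|y|^{-4}$ to linear order at $P_x$ and invoking the odd symmetry of $B_r(P_x)$ to kill the product of the linear term with the odd kernel components; a crude Lipschitz bound alone gives $O(\log(r/d(x)))$, which happens to still be absorbed by $C_\gamma d(x)^{\gamma-1}r_x^{-\gamma}$, so the lemma stands either way, but the symmetry observation should be made explicit in a full proof.
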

	
 	Let us prove Lemma \ref{logestimate} first by assuming Lemma \ref{secondgradient} is true.
	
	\begin{proof}[Proof of Lemma \ref{logestimate}]
		If $|x-y| \geq 2^{-4-1/\gamma}r_{x}$, then we have $|x-y|^{-\gamma} \leq C_{\gamma}\dfrac{A_{\gamma}}{|\widetilde{w}(P_{x})|}$. The proof follows directly from
		\[
		|\nabla\widetilde{v}(x) - \nabla\widetilde{v}(y)| \leq 2\norm{\nabla\widetilde{v}}_{L^{\infty}(\mathbb{R}^{2})}.
		\]
		
		On the other hand, assume that $|x-y| < 2^{-4-1/\gamma}r_{x}$. Without loss of generality, we can further assume $|x-y| \leq 1/4$. Indeed, for $x, y \in D$, we have $|x-y| \leq 2$, we can always insert a finite number of points to make each two consecutive points with a distance less than $1/4$, see Figure \ref{figure2}. Since $x,y\in A(0;9/10,1)$, we have $d(x),d(y) \leq 1/4$. Now we can choose $Q_{1}$ and $Q_{2}$ in the same way as in the proof of Lemma \ref{dgamma}. We parametrize the segments $xQ_{1}$, $Q_{1}Q_{2}$ and $Q_{2}y$ by
		\[
		z_{1}(s) = x + s(Q_{1}-x),
		\]
		\[
		z_{2}(s) = Q_{1} + s(Q_{2} - Q_{1}),
		\]
		\[
		z_{3}(s) = y + s(Q_{2} - y),
		\]
		where $s\in[0,1]$.\\
		
		For $i =1, 2, 3$ and $s\in[0,1]$, we also have
		\[
		|z_{i}(s)-P_{x}| \leq |z_{i}(s) -x| + d(x) \leq 3|x-y|+d(x).
		\]
		So 
		\begin{equation}\label{dvsrx}
		d(z_{i}(s)) \leq 2^{-2-1/\gamma}r_{x}.
		\end{equation}
		
		These inequalities imply
		\[
		P_{z_{i}(s)} \in B_{x} := B_{r_{x}}(P_{x}).
		\]
		Note also that we have 
		\[
		|\widetilde{w}(P_{z_{i}(s)}) - \widetilde{w}(P_{x})| \leq A_{\gamma}|P_{z_{i}(s)}-P_{x}|^{\gamma} \leq \frac{|\widetilde{w}(P_{x})|}{2},
		\]
		which gives us 
		\[
		|\widetilde{w}(P_{z_{i}(s)})| \geq \frac{|\widetilde{w}(P_{x})|}{2},
		\]
		implying 
		\[
		r_{z_{i}(s)} \geq 2^{-\frac{1}{\gamma}}r_{x}.
		\]
		From (\ref{dvsrx}) it follows that 
		\[
		d(z_{i}(s)) \leq \frac{1}{4}r_{z_{i}(s)}.
		\]
		Now we can apply Lemma \ref{secondgradient} to $z_{i}(s)$ and obtain
		\[
		|\nabla^{2}\widetilde{v}(z_{i}(s))| \leq C_{\gamma}d(z_{i}(s))^{-1+\gamma}r_{z_{i}(s)}^{-\gamma} \leq C_{\gamma}(s|x-y|)^{-1+\gamma}r_{x}^{-\gamma}
		\]
		Integrating along the path $x \rightarrow Q_{1} \rightarrow Q_{2} \rightarrow y$, we have
		\begin{tabbing}
			$\quad \dfrac{|\nabla\widetilde{v}(x) - \nabla\widetilde{v}(y)|}{|x-y|^{\gamma}}$ \= $\leq$ \= $2\displaystyle\int_{0}^{1}|\nabla^{2}\widetilde{v}(x+s(Q_{1}-x))||x-y|^{1-\gamma} ds$\\
			\> \>$+$ $\displaystyle\int_{0}^{1}|\nabla^{2}\widetilde{v}(Q_{1}+s(Q_{2}-Q_{1}))||x-y|^{1-\gamma}ds$\\
			\> \>$+$ $2\displaystyle\int_{0}^{1}|\nabla^{2}\widetilde{v}(y+s(Q_{2}-y))||x-y|^{1-\gamma}ds$\\
			\> $\leq$ \> $C|x-y|^{1-\gamma}\displaystyle\int_{0}^{1}(s|x-y|)^{-1+\gamma}r_{x}^{-\gamma}ds$\\
			\> $\leq$ \> $C_{\gamma}r_{x}^{-\gamma}$ $\leq$ $C_{\gamma}\dfrac{A_{\gamma}}{|\widetilde{w}(P_{x})|}.$
		\end{tabbing}
		
	\end{proof}

	\begin{figure}[h]
		\includegraphics[scale=0.5]{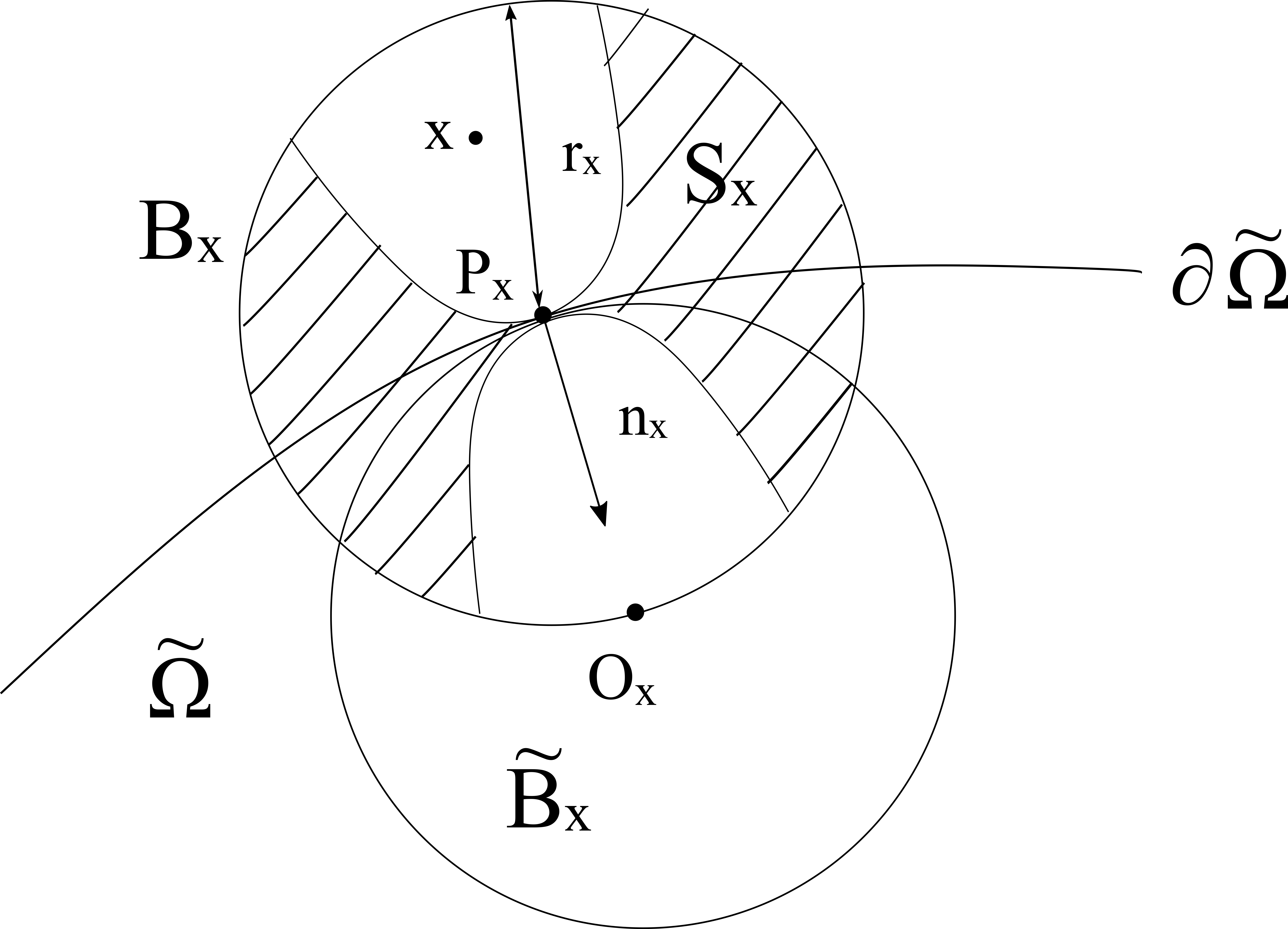}
		\caption{}
		\label{figure3}
	\end{figure}

	Now our goal is to prove Lemma \ref{secondgradient}. To achieve this, we need a result from \cite{KRYZ}, which basically says that $\partial\widetilde{\Omega}$ is sufficiently "flat" near $P_{x}$, for $x \notin \widetilde{\Omega}$, see Figure \ref{figure3}.
	
	\begin{lemma}\label{geometriclemma}
		Given $x \in \mathbb{R}^{2}\setminus \widetilde{\Omega}$, let $n_{x} := \nabla\widetilde{\varphi}(P_{x})/|\nabla\widetilde{\varphi}(P_{x})|$, $r_{x} := \Big(\dfrac{|\widetilde{w}(P_{x})|}{2A_{\gamma}}\Big)^{\frac{1}{\gamma}}$, and 
		$S_{x} := \{P_{x}+\rho\nu : \rho \in [0,r_{x}), |\nu| = 1, (\frac{\rho}{r_{x}})^{\gamma} \geq 2|\nu\cdot n_{x}|\}$.
		If $\nu$ is a unit vector and $\rho \in [0,r_{x})$, then the following hold. If $\nu\cdot n_{x} \geq 0$ and $P_{x} + \rho\nu \not\in S_{x}$, then $P_{x} + \rho\nu \in \widetilde{\Omega}$. If $\nu\cdot n_{x} \leq 0$ and $P_{x} + \rho\nu \not\in S_{x}$, then $P_{x} + \rho\nu \in \mathbb{R}^{2}\setminus\widetilde{\Omega}$.
	\end{lemma}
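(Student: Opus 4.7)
The plan is to prove Lemma \ref{geometriclemma} by a first-order Taylor expansion of $\widetilde{\varphi}$ at the base point $P_x$, following the strategy of the analogous geometric lemma in \cite{KRYZ}. The key ingredients are $\widetilde{\varphi}(P_x)=0$ (since $P_x\in\partial\widetilde{\Omega}$), $\nabla\widetilde{\varphi}(P_x)=|\widetilde{w}(P_x)|\,n_x$ (by the definition of $n_x$), and a uniform H\"older bound on $\nabla\widetilde{\varphi}$ in terms of $A_\gamma$.

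First I would upgrade the H\"older bound on $w$ to one on $\nabla\widetilde{\varphi}$; since $\widetilde{w}=\nabla^{\perp}\widetilde{\varphi}$ differs from $\nabla\widetilde{\varphi}$ only by a rotation, it suffices to bound $\widetilde{w}$. Formula (\ref{definitionofw}) writes $\widetilde{w}(y)$ as a smooth matrix, uniformly bounded in $C^{1}$ on $\{|y|\ge 1\}$, applied to $w(\widetilde{y})$; the inversion $y\mapsto y/|y|^{2}$ is bi-Lipschitz and $C^{\infty}$ on any compact subset of $\{|y|\ge 1\}$, and $\widetilde{\Omega}$ lives inside $\{1\le|y|\le 10/9\}$. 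This yields $\|\widetilde{w}\|_{\dot{C}^{\gamma}(\overline{\widetilde{\Omega}})}\le C_{0}\,A_{\gamma}$ modulo a harmless lower-order $A_{\infty}$ contribution. The Whitney extension of the paper is already in force, so $\widetilde{\varphi}$ is $C^{1,\gamma}$ on $\mathbb{R}^{2}\setminus\{0\}$, and combining with $\widetilde{\varphi}(P_x)=0$, for any unit vector $\nu$ and $\rho\in[0,r_x)$ one has
\begin{equation*}
\widetilde{\varphi}(P_x+\rho\nu)\;=\;\rho\,|\widetilde{w}(P_x)|\,(\nu\cdot n_x)\;+\;E(\rho),\qquad |E(\rho)|\le \frac{C_{0}A_{\gamma}}{1+\gamma}\,\rho^{1+\gamma}.
\end{equation*}

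To conclude, I argue by contradiction in each case. Suppose $\nu\cdot n_x\ge 0$ and $P_x+\rho\nu\notin S_x$; unwinding the definitions of $S_x$ and $r_x$, this is equivalent to $|\widetilde{w}(P_x)|(\nu\cdot n_x)>A_{\gamma}\rho^{\gamma}$. If moreover $P_x+\rho\nu\notin\widetilde{\Omega}$, the segment $\{P_x+s\nu:s\in[0,\rho]\}$ starts on $\partial\widetilde{\Omega}$, enters $\widetilde{\Omega}$ for small $s>0$ (since $\nu\cdot n_x>0$ and $\partial\widetilde{\Omega}$ is $C^{1}$ at $P_x$ with inward normal $n_x$), and terminates outside $\widetilde{\Omega}$. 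By openness of $\widetilde{\Omega}$ and the intermediate value theorem the segment re-crosses $\partial\widetilde{\Omega}$ at some $s^*\in(0,\rho]$, where $\widetilde{\varphi}(P_x+s^*\nu)=0$. Substituting into the Taylor expansion with $\rho$ replaced by $s^*$ gives $|\widetilde{w}(P_x)|(\nu\cdot n_x)\le\frac{C_{0}A_{\gamma}(s^*)^{\gamma}}{1+\gamma}\le\frac{C_{0}A_{\gamma}\rho^{\gamma}}{1+\gamma}$. If the numerical factor in the definition of $r_x$ (the $2$ in the statement) is at least $C_{0}/(1+\gamma)$, this contradicts the hypothesis and forces $P_x+\rho\nu\in\widetilde{\Omega}$. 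The case $\nu\cdot n_x\le 0$ is the mirror argument, applied to the first-entry time $s^*:=\inf\{s\in[0,\rho]:P_x+s\nu\in\widetilde{\Omega}\}$.

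The main obstacle is the clean bookkeeping in the first step: transferring the H\"older control of $w$ on $\Omega$ through the inversion to that of $\nabla\widetilde{\varphi}$ on $\overline{\widetilde{\Omega}}$, with a constant small enough to be absorbed by the numerical factor in the definitions of $r_x$ and $S_x$. The fact that $\widetilde{\Omega}$ lives in the fixed annulus $\{1\le|y|\le 10/9\}$, on which the inversion has uniformly bounded derivatives, is exactly what legitimizes this transfer; the rest of the argument is a direct adaptation of the half-plane geometric lemma in \cite{KRYZ}, with reflection across a line replaced by inversion across $\partial D$.
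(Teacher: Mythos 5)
The paper gives no proof of Lemma \ref{geometriclemma}: the Remark immediately following merely states that the corresponding lemma in \cite{KRYZ} ``transfers to our setting without any effort,'' since it only uses $\widetilde{\Omega}=\{\widetilde{\varphi}>0\}$ and that $\widetilde{\varphi}$ is globally defined. Your proposal supplies what the paper omits, and the mechanism you use --- first-order Taylor expansion of $\widetilde{\varphi}$ at $P_x$ plus an IVT contradiction on the segment $P_x+s\nu$ --- is exactly how the \cite{KRYZ} geometric lemma is proved; so at that level you and the paper agree.

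That said, the load-bearing step is the one you yourself flag and it deserves a more precise diagnosis than ``$\|\widetilde{w}\|_{\dot{C}^\gamma}\le C_0A_\gamma$ modulo a harmless lower-order $A_\infty$ contribution.'' Write $\widetilde{w}(x)=M(x)\,w(\widetilde{x})$ with $M$ the matrix from (\ref{definitionofw}). Then for $x,y$ with $|x|,|y|\ge1$ and the segment between them in $\{|z|\ge1\}$,
\begin{equation*}
|\widetilde{w}(x)-\widetilde{w}(y)|\le |M(x)|\,A_\gamma|\widetilde{x}-\widetilde{y}|^\gamma+|M(x)-M(y)|\,A_\infty\le A_\gamma|x-y|^\gamma+CA_\infty|x-y|,
\end{equation*}
because $|M(z)|=|z|^{-2}\le1$ and the inversion is a contraction on $\{|z|\ge1\}$. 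So the multiplicative factor in front of $A_\gamma$ is exactly $1$, not an unspecified $C_0$: your worry about whether ``the $2$ is at least $C_0/(1+\gamma)$'' is a non-issue. The genuine issue is the additive $CA_\infty|x-y|^{1-\gamma}$ contribution, which in a Hölder seminorm is \emph{not} lower-order -- on a region of diameter $O(1)$ you get $\|\widetilde{w}\|_{\dot{C}^\gamma}\le A_\gamma+CA_\infty$, and $A_\infty$ can dominate $A_\gamma$. Running your IVT argument honestly gives $|\widetilde{w}(P_x)|(\nu\cdot n_x)\le\tfrac{A_\gamma(s^*)^\gamma}{1+\gamma}+\tfrac{CA_\infty s^*}{2}$; the second term does not cancel against the hypothesis $A_\gamma\rho^\gamma<|\widetilde{w}(P_x)|(\nu\cdot n_x)$ unless $A_\infty\lesssim A_\gamma$. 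The fix is to state and prove the lemma with $\widetilde{A}_\gamma:=\|\widetilde{w}\|_{\dot{C}^\gamma}$ (or $A_\gamma+CA_\infty$) in place of $A_\gamma$ in $r_x$ and $S_x$ --- then it is literally the \cite{KRYZ} statement applied to $\widetilde{\varphi}$ --- and then track the extra $A_\infty$ through Lemmas \ref{secondgradient}, \ref{logestimate}, and Proposition \ref{anulusestimate}, where it is indeed absorbable into the final bound $C_\gamma A_\gamma(1+\log_+\tfrac{A_\gamma}{A_\inf})+C_\gamma A_\infty$ alongside the $A_\infty$ term already present in (\ref{Agammaestimate}).

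Two further inaccuracies worth correcting. First, ``$\widetilde{\Omega}$ lives inside $\{1\le|y|\le10/9\}$'' is false: $\widetilde{\Omega}$ is the inversion image of all of $\Omega$ and is unbounded whenever $0\in\Omega$. Only the portion relevant to the annulus argument ($P_x$ for $x\in\Omega\cap A(0;9/10,1)$) lies there; your uniform bounds on $M$ on $\{|y|\ge1\}$ do however save the computation. Second, your IVT step silently uses $\{\widetilde{\varphi}>0\}\subseteq\widetilde{\Omega}$; after an off-the-shelf Whitney extension this inclusion is not automatic and must be arranged (e.g.\ by extending so the sign condition is preserved), which is precisely the hypothesis the paper's Remark insists on.
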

	\textbf{Remark}. The proof of Lemma \ref{geometriclemma} only uses the fact that $\widetilde{\Omega} = \{\widetilde{\varphi}(x) > 0\}$ as well as the fact that $\widetilde{\varphi}$ is defined in $\mathbb{R}^{2}$. Indeed, $\varphi$ is defined in $\mathbb{R}^{2}$ after we applied Whitney-type extension theorem before, and $\varphi$ and $\widetilde{\varphi}$ are related by (\ref{tildevarphi}). So we can transfer Lemma \ref{geometriclemma} to our setting without any effort.\\

	Now, let us prove Lemma \ref{secondgradient}.

	\begin{figure}[h]
		\includegraphics[scale=0.5]{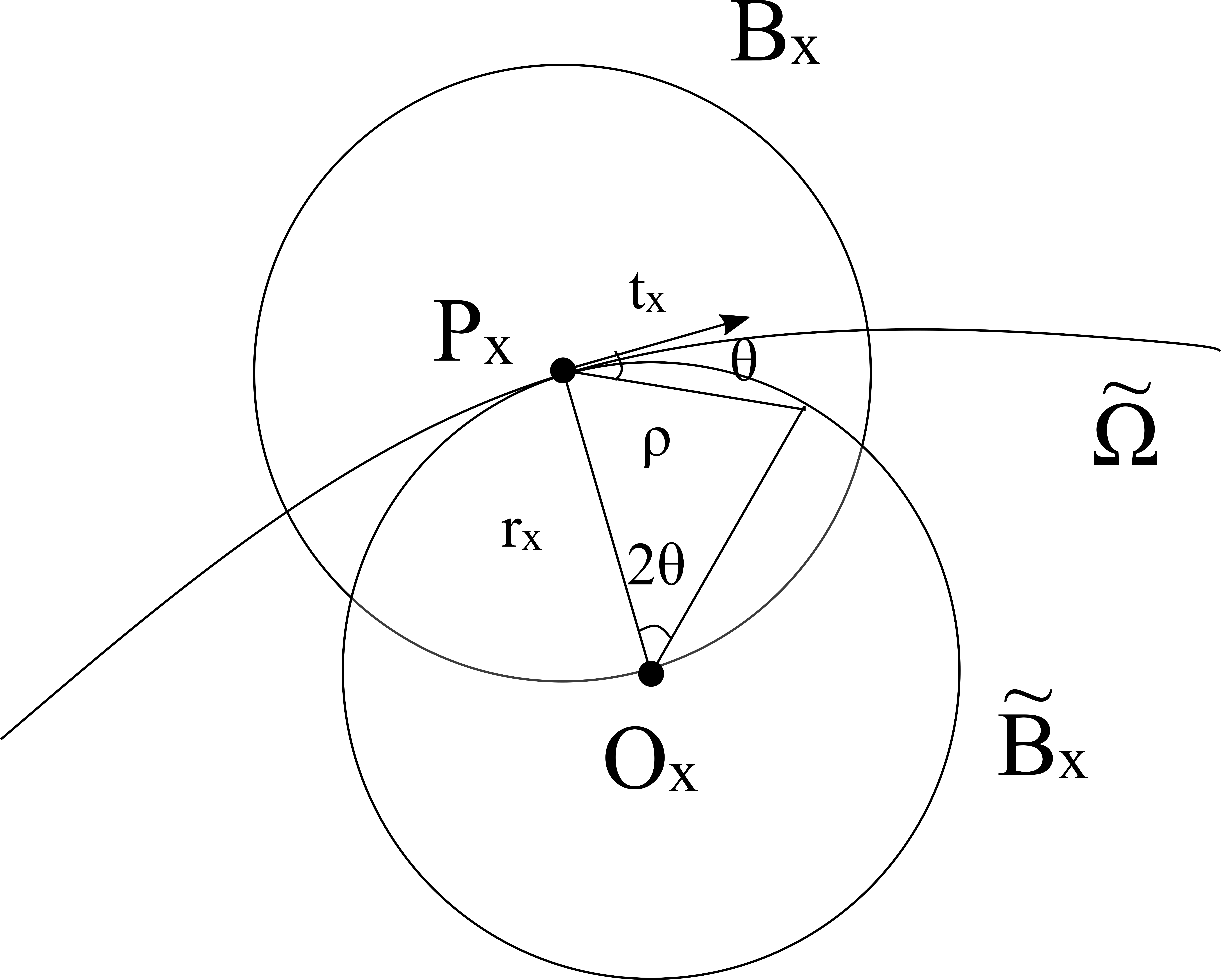}
		\caption{}
		\label{figure4}
	\end{figure}

	\begin{proof}[Proof of Lemma \ref{secondgradient}]
		Let $n_{x}, S_{x}$ be from Lemma \ref{geometriclemma} and let $\widetilde{B}_{x} := B_{r_{x}}(O_{x})$, where $O_{x} := P_{x} + r_{x}n_{x}$, $B_{x} = B_{r_{x}}(P_{x})$, see Figure \ref{figure3}. Then $P_{x} \in \partial\widetilde{B}_{x}$ and the unit inner normal vector to $\partial\widetilde{B}_{x}$ at $P_{x}$ is $n_{x}$. First note that we have $\partial\widetilde{B}_{x}\cap B_{x} \subseteq S_{x}$. Indeed, let $t_{x}$ be the vector tangent to $\partial\widetilde{B}_{x}$ at $P_{x}$, let $\theta \in [0,\frac{\pi}{2}]$ denote the angle between $t_{x}$ and $\nu$, see Figure \ref{figure4}. If $P_{x} + \rho\nu \in \partial\widetilde{B}_{x}\cap B_{x}$ then we have $\theta \leq \frac{\pi}{6}$. By law of sines, we also have
		\[
		\frac{\rho}{\sin 2\theta} = \frac{r_{x}}{\cos\theta}.
		\]
		Note that $\nu\cdot n_{x} = \sin\theta$, so $\big(\frac{\rho}{r_{x}}\big)^{\gamma} \geq 2|\nu\cdot n_{x}|$ follows immediately from the fact that $(2\sin\theta)^{\gamma} \geq 2\sin\theta$, which is true when $2\sin\theta \leq 1$.\\
		
		Combining this with Lemma \ref{geometriclemma} gives us
		\[
		(\widetilde{\Omega}\Delta\widetilde{B}_{x})\cap B_{x} \subseteq S_{x}.
		\]
		Let 
		\[
		u_{\widetilde{B}_{x}}(z) := \frac{1}{2\pi} \int_{\widetilde{B}_{x}}\frac{(z-y)^{\perp}}{|z-y|^{2}}\frac{R^{4}}{|y|^{4}}dy.
		\]
		We claim that (with a universal constant $C < \infty$),
		\begin{equation}\label{magicsymmetric}
		|\nabla^{2}u_{\widetilde{B}_{x}}(x)| \leq \frac{C}{r_{x}}.
		\end{equation}
		The proof will follow directly from Lemma \ref{symmetricmagic} below.

		By the definition of $\widetilde{v}$ and $u_{\widetilde{B}_{x}}$, we have
		\begin{equation}\label{differencedouble}
		\begin{split}
		|\nabla^{2}\widetilde{v}(x) - \nabla^{2}u_{\widetilde{B}_{x}}(x)|
		&\leq \int_{\mathbb{R}^{2}\setminus B_{x}}\frac{C}{|x-y|^{3}}dy\\
		& \quad + \int_{(\widetilde{\Omega}\Delta\widetilde{B}_{x})\cap B_{x}}\frac{C}{|x-y|^{3}}dy.
	    \end{split}
	    \end{equation}
	    By assumption of the Lemma, the distance between $x$ and $P_{x}$ (denoted by $d(x)$) is less than $\frac{1}{4}r_{x}$. Therefore the first term in the right hand side can be bounded by $Cr_{x}^{-1}$. Now we denote the second term by $I$.\\
		
		First note that 
		\[
		\text{dist}(x,S_{x}) \geq \frac{d(x)}{2}.
		\]
		Indeed, if $P_{x} + \rho\nu \in B_{d(x)/2}(x)$, then we have $-1 \leq \nu\cdot n_{x} \leq -\frac{\sqrt{3}}{2}$ and $\rho < r_{x}$, hence $P_{x} + \rho\nu \not\in S_{x}$. Also note that, when $|P_{x} - y| \geq 2d(x)$, we have
		\[
		|P_{x} - y| \leq |x - y| + d(x) \leq 2|x-y|.
		\]
		Now we deduce, with a universal constant $C < \infty$,
		
		\begin{tabbing}
			$\qquad$ $\quad$ $I$ \= $\leq$ $\displaystyle\int_{S_{x}}\frac{C}{|x-y|^{3}}dy$\\
			\> $\leq$ $\displaystyle\int_{S_{x}\setminus B_{2d(x)}(P_{x})}\frac{C}{|x-y|^{3}}dy + C(\frac{d(x)}{2})^{-3}|S_{x}\cap B_{2d(x)}(P_{x})|$\\
			\> $\leq$ $\displaystyle\int_{S_{x}\setminus B_{2d(x)}(P_{x})}\frac{C}{|P_{x}-y|^{3}}dy + \frac{C}{d(x)^{3}}|S_{x}\cap B_{2d(x)}(P_{x})|$\\
			\> $\leq$ $\displaystyle\int_{2d(x)}^{r_{x}}\frac{C}{\rho^{3}}(\frac{\rho}{r_{x}})^{\gamma}\rho d\rho + \frac{C}{d(x)^{3}}\displaystyle\int_{0}^{2d(x)}(\frac{\rho}{r_{x}})^{\gamma}\rho d\rho$\\
			\> $\leq$ $Cd(x)^{-1+\gamma}r_{x}^{-\gamma}.$
		\end{tabbing}
		
		Combining with (\ref{magicsymmetric}) and (\ref{differencedouble}), we obtain
		\[
		|\nabla^{2}\widetilde{v}(x)| \leq Cd(x)^{-1+\gamma}r_{x}^{-\gamma} + Cr_{x}^{-1}.
		\]
		The proof is complete because $d(x) \leq \frac{1}{4}r_{x}$.
		
	\end{proof}
	
	Next, we are going to prove (\ref{magicsymmetric}). To make the argument simpler and also more general, we let $B_{r} = \{x \in \mathbb{R}^{2}:|x-(0,-r)| < r\}$, $f$ be a function such that $f$, $\nabla f$ and $\nabla^{2}f$ are bounded by a universal constant $C$ in $B_{r}$, and 
	\[
	 u(x) = \dfrac{1}{2\pi}\displaystyle\int_{B_{r}}\frac{(x-y)^{\perp}}{|x-y|^{2}}f(y)dy.
	\] 
	By setting $B_{r}$ to be $\widetilde{B}_{x}$ and $f(y)$ to be $\dfrac{1}{|y|^{4}}\chi_{\widetilde{B}_{x}}$, (\ref{magicsymmetric}) can be derived from the following Lemma.
	
	\begin{lemma}\label{symmetricmagic}
		With the above notations, we have $|\nabla^{2}u(x)| \leq \dfrac{C}{r}$, for $r < 1$. where $x = (0,h)$, $h$ is any positive real number.
	\end{lemma}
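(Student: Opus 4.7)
The plan is to reduce to $r = 1$ by scaling, exploit the symmetry of $B_r$ and $x = (0, h)$ about the vertical axis, and then use integration by parts together with careful slicing to bound the singular second-derivative kernels.

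\textbf{Scaling.} Setting $\tilde u(\xi) := u(r\xi)/r$ and $\tilde f(\eta) := f(r\eta)$ converts the claim to the case $r = 1$: one needs $|\nabla^2 \tilde u(\xi)| \leq C$ for $\xi = (0, h')$ with $h' > 0$, where $\tilde f$ still has uniformly bounded $C^{1,1}$-norm on $B_1 := \{|\eta - (0,-1)| < 1\}$.

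\textbf{Symmetry reduction.} Decompose $\tilde f = f_e + f_o$ into parts even and odd in $\eta_1$. Since $B_1$ and $\xi$ are symmetric under $\eta_1 \mapsto -\eta_1$, and $u$ is harmonic outside $B_1$ ($\partial_{x_2}^2 \tilde u_j = -\partial_{x_1}^2 \tilde u_j$) and divergence-free ($\partial_{x_1}\partial_{x_2} \tilde u_2 = -\partial_{x_1}^2 \tilde u_1$, $\partial_{x_1}\partial_{x_2} \tilde u_1 = \partial_{x_1}^2 \tilde u_2$), direct computation at $\xi = (0, h')$ shows that all six components of $\nabla^2 \tilde u$ reduce (up to signs) to the two integrals
\[
I_1 := \int_{B_1} K_1(\eta_1, a)\, f_e(\eta)\, d\eta, \qquad I_2 := \int_{B_1} K_2(\eta_1, a)\, f_o(\eta)\, d\eta,
\]
with $a := h' - \eta_2$, $K_1 = a(3\eta_1^2 - a^2)/(\eta_1^2 + a^2)^3$ (even in $\eta_1$) and $K_2 = \eta_1(\eta_1^2 - 3a^2)/(\eta_1^2 + a^2)^3$ (odd in $\eta_1$).

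\textbf{Integration by parts.} The key identities are
\[
K_1 = -\partial_{\eta_1}\!\left[\frac{a\,\eta_1}{(\eta_1^2 + a^2)^2}\right], \qquad \eta_1 K_2 = \partial_{\eta_1}\!\left[-\frac{\eta_1^3}{(\eta_1^2 + a^2)^2}\right],
\]
the second of which is useful after writing $f_o(\eta) = \eta_1\, g(\eta)$ with $g$ even in $\eta_1$ and $|g|, |\partial_{\eta_1} g| \leq C$. Integrating by parts in $\eta_1$ on each slice $\eta_2 \in [-2, 0]$ (where $\eta_1 \in [-L, L]$ with $L := \sqrt{-\eta_2(2 + \eta_2)}$), the odd/even parity of the primitives together with that of $f_e, g$ produces boundary terms controlled by $aL f_e(L, \eta_2)/(L^2 + a^2)^2$ and $L^3 g(L, \eta_2)/(L^2 + a^2)^2$, and volume terms whose integrands carry either $\partial_{\eta_1} f_e$ (vanishing linearly at $\eta_1 = 0$, so bounded by $C|\eta_1|$) or $\partial_{\eta_1} g$ (bounded).

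\textbf{Main obstacle.} The hard part is showing the resulting $\eta_2$-integrals are $O(1)$ uniformly in $h'$. The model integral $\int_{-2}^0 aL/(L^2 + a^2)^2\, d\eta_2$ can be evaluated exactly by recognizing that the same computation with $f \equiv 1$ reproduces the explicit exterior velocity of a constant-vorticity disk, giving $\pi/[2(h'+1)^3] \leq \pi/2$. The remaining model integrals $\int L^3/(L^2+a^2)^2 \, d\eta_2$, $\int \eta_1^2 a/(\eta_1^2+a^2)^2 \, d\eta$, and $\int \log(1 + L^2/a^2)\, d\eta_2$ are each bounded by $C$ via the substitution $s = -\eta_2$ together with the identity $L^2 + a^2 = 2s(1+h') + (h')^2$, splitting into $s \in [0, h']$ and $s \in [h', 2]$ where the behavior of $L^2/a^2$ and hence of the integrands changes between the two regimes. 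Combining everything yields $|I_1|, |I_2| \leq C$, and thus $|\nabla^2 u(x)| \leq C/r$.
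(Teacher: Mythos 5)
Your argument is essentially correct and reaches the same conclusion, but it is organized around a genuinely different decomposition than the one in the paper. The paper Taylor-expands $f$ at the origin into a constant, a linear part, and a quadratic remainder, and treats the three pieces with separate tools: the constant piece by the rotational-symmetry/divergence-theorem evaluation of the constant-density disk; the quadratic remainder by the brute-force bound $|x-y|\geq |y|$; and the linear piece by a divergence-form integration by parts over $B_r$ whose area term is again a constant-density disk and whose $\partial B_r$ boundary term is controlled using the parity in $y_1$ and the local flatness of $\partial B_r$ near the origin. You instead rescale to $r=1$, observe that harmonicity plus the divergence- and curl-free structure outside $B_1$ collapse $\nabla^2 u$ to two scalar kernels $K_1$ (even in $\eta_1$) and $K_2$ (odd in $\eta_1$), split $f$ into its even and odd parts, and integrate by parts in $\eta_1$ slice by slice; the extra cancellation that the paper extracts from the Taylor remainder you extract instead from the facts that $\partial_{\eta_1}f_e$ vanishes at $\eta_1=0$ and that $f_o = \eta_1 g$ with $g$, $\partial_{\eta_1}g$ bounded. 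Both proofs ultimately rest on the same three mechanisms---exact computability of the constant-density disk, parity in the first variable, and one integration by parts to reduce the kernel singularity---so neither is dramatically shorter, but your version avoids the somewhat arbitrary choice of base point for Taylor expansion and makes the $1$D model integrals fully explicit. Two small cautions: the slice-boundary terms you get differ from the paper's $\partial B_r$ surface integral by the arclength Jacobian $dS = d\eta_2/L$, so the two ``boundary terms'' are not literally the same object even though both converge; and the uniform-in-$h'$ bounds on the four model integrals, which you state as routine, do require the splitting to be done at $s\sim (h')^2$ rather than $s\sim h'$ in one of the cases (using $L^2+a^2 \geq \max\{2s(1+h'), (h')^2\}$), so it is worth writing those out explicitly if you flesh this into a full proof.
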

	
	\begin{proof}[Proof of Lemma \ref{symmetricmagic}]
		
		First, by assumption, for any $y$ in $B_{r}$, we have
		\[
		|f(y) - f(0) - \nabla f(0)\cdot y| \leq C|y|^{2}.
		\]
		By definition, we have
		\[
		u(x) = \dfrac{1}{2\pi}\int_{B_{r}}\frac{(x-y)^{\perp}}{|x-y|^{2}}f(y)dy = I_{1} + I_{2} + I_{3},
		\]
		where 
		\begin{tabbing}
			$\qquad$ \= $I_{1} := \dfrac{1}{2\pi}\displaystyle\int_{B_{r}}\dfrac{(x-y)^{\perp}}{|x-y|^{2}}f(0)dy,$\\
			\> $I_{2} := \dfrac{1}{2\pi}\displaystyle\int_{B_{r}}\dfrac{(x-y)^{\perp}}{|x-y|^{2}}\nabla f(0)\cdot \vec{y}dy,$\\
			\> $I_{3} := \dfrac{1}{2\pi}\displaystyle\int_{B_{r}}\dfrac{(x-y)^{\perp}}{|x-y|^{2}}(f(y) - f(0) - \nabla f(0)\cdot \vec{y})dy.$
		\end{tabbing}
		
		So we just need to prove $|\nabla^{2}I_{i}(x)| \leq \dfrac{C}{r}$, for $i = 1,2,3$.\\
		
		For $i = 1$, we use the argument from \cite{KRYZ}. Observe that
		\[
		I_{1}(x) = f(0)(\nabla^{\perp}\Delta^{-1}\chi_{B_{r}})(x).
		\]
		Since $|x - (0,-r)| > r$, we have by the rotational invariance of $I_{1}(x)$ (and with $n$ the outer unit normal vector to $\partial B_{|x-(0,-r)|}\big((0,-r)\big)$)
		
		\begin{equation}\label{estimateI1}
		\begin{split}
			I_{1}(x) & = \dfrac{\big(x - (0,-r)\big)^{\perp}}{|x-(0,-r)|}|I_{1}(x)|\\
			& = \dfrac{\big(x-(0,-r)\big)^{\perp}}{|x-(0,-r)|}\displaystyle\fint_{\partial B_{|x-(0,-r)|}\big((0,-r)\big)}n\cdot f(0)\nabla\Delta^{-1}\chi_{B_{r}}d\sigma\\
 			& = \dfrac{\big(x-(0,-r)\big)^{\perp}}{|x-(0,-r)|^{2}}\dfrac{1}{2\pi}\displaystyle\int_{B_{|x-(0,-r)|}\big((0,-r)\big)}f(0)\chi_{B_{r}}(y)dy\\
			& = \dfrac{1}{2}f(0)r^{2}\dfrac{(x - (0,-r))^{\perp}}{|x-(0,-r)|^{2}}.
		\end{split}
		\end{equation}
		
		Differentiate this, then we have
		\[
		|\nabla^{2}I_{1}(x)| \leq \dfrac{C}{r}.
		\]
		
		For $i = 3$, since $|x-y| \geq |y|$, we have
		\[
		|\nabla^{2}I_{3}(x)| \lesssim \int_{B_{r}}\frac{|y|^{2}}{|x-y|^{3}}dy \leq \int_{B_{r}}\frac{1}{|y|}dy \lesssim r.
		\]
		
		For $i = 2$, first note that it suffices to control $|\nabla^{2}(k\cdot I_{2}(x))|$, for any constant vector $k = (k_{1}, k_{2})$. Denoting $a = \nabla f(0)$, we have
		
		\begin{tabbing}
			$\quad$ $2k\cdot I_{2}(x)$ \= $=$ $-2\displaystyle\int_{B_{r}}\dfrac{(x-y)\cdot k^{\perp}}{|x-y|^{2}}(a\cdot y)dy$\\
			\> $=$ $\displaystyle\int_{B_{r}}\nabla_{y}\cdot\big((\ln|x-y|^{2})k^{\perp}\big)(a\cdot y)dy$\\
			\> $=$ $\displaystyle\int_{\partial B_{r}}(n\cdot k^{\perp})\ln|x-y|^{2}(a\cdot y)dS(y)$ $-$ $(k^{\perp}\cdot a)\displaystyle\int_{B_{r}}\ln|x-y|^{2}dy$\\
			\> $:=$ $I_{21}(x) - I_{22}(x).$
		\end{tabbing}
		Here $n$ is the outer normal vector of $\partial B_{r}$. \\
		
		Controlling $\nabla^{2}I_{22}(x)$ is straightforward. Indeed, $\nabla I_{22}(x)$ is the velocity field generated by the vorticity patch $2(k^{\perp}\cdot a)\chi_{B_{r}}(x)$. By estimate (\ref{estimateI1}), we have 
		\[
		\nabla I_{22}(x) = 2(k^{\perp}\cdot a)\pi r^{2}\dfrac{(x - (0,-r))^{\perp}}{|x-(0,-r)|^{2}}.
		\]
		So $|\nabla ^{2}I_{22}(x)| \leq C$, where $C$ is a universal constant.\\
		
		For $I_{21}(x)$, note that $\partial_{22}I_{21}(x) = -\partial_{11}I_{21}(x)$ and $\partial_{12}I_{21}(x) = \partial_{21}I_{21}(x)$, so it suffices to consider two cases.
		
		\begin{equation}\label{partial11}
			\partial_{11}I_{21}(x) = 2\displaystyle\int_{\partial B_{r}}(n\cdot k^{\perp}) \dfrac{(h-y_{2})^{2} - y_{1}^{2}}{|x-y|^{4}}(a_{1}y_{1} + a_{2}y_{2})dS(y),
		\end{equation}
		\begin{equation}\label{partial12}
		\partial_{12}I_{21}(x) = 4\int_{\partial B_{r}}(n\cdot k^{\perp}) \dfrac{y_{1} (h-y_{2})}{|x-y|^{4}}(a_{1}y_{1} + a_{2}y_{2})dS(y).
		\end{equation}

		First note that when $|y| \leq r/4$, we have $y_{2} \leq \dfrac{C}{r} y_{1}^{2}$ and that $|n\cdot k - k_{2}| \leq \dfrac{C}{r}|y_{1}|$.\\
		
		For (\ref{partial11}), we have, with a constant $C < \infty$, that depends on $a$ and $k$:
		\begin{align*}
		\partial_{11}I_{21}(x) & =  2\displaystyle\int_{\partial B_{r}\cap \{|y| \geq r/4}\}(n\cdot k^{\perp}) \dfrac{(h-y_{2})^{2} - y_{1}^{2}}{|x-y|^{4}}(a_{1}y_{1} + a_{2}y_{2})dS(y)\\
		& \quad  + 2\displaystyle\int_{\partial B_{r}\cap\{|y| < r/4\}}(n\cdot k^{\perp}) \dfrac{(h-y_{2})^{2} - y_{1}^{2}}{|x-y|^{4}}a_{1}y_{1}dS(y)\\
		 & \quad + 2\displaystyle\int_{\partial B_{r}\cap\{|y| < r/4\}}(n\cdot k^{\perp}) \dfrac{(h-y_{2})^{2} - y_{1}^{2}}{|x-y|^{4}}a_{2}y_{2}dS(y)\\
		 & \leq C + 2\displaystyle\int_{\partial B_{r}\cap\{|y| < r/4\}}(n\cdot k^{\perp} - k_{2}) \dfrac{(h-y_{2})^{2} - y_{1}^{2}}{|x-y|^{4}}a_{1}y_{1}dS(y)\\
		 & \quad + 2\displaystyle\int_{\partial B_{r}\cap\{|y| < r/4\}}k_{2} \dfrac{(h-y_{2})^{2} - y_{1}^{2}}{|x-y|^{4}}a_{1}y_{1}dS(y)\\
		 & \quad + \frac{C}{r}\displaystyle\int_{\partial B_{r}\cap\{|y| < r/4\}} \dfrac{((h-y_{2})^{2} - y_{1}^{2})y_{1}^{2}}{((h-y_{2})^{2} + y_{1}^{2})^{2}}dS(y)\\
		 & \leq C + \frac{C}{r}\displaystyle\int_{\partial B_{r}\cap\{|y| < r/4\}}\dfrac{((h-y_{2})^{2} - y_{1}^{2})y_{1}^{2}}{((h-y_{2})^{2} +  y_{1}^{2})^{2}}dS(y) + 0\\ 
		 &  \quad + \frac{C}{r}\displaystyle\int_{\partial B_{r}\cap\{|y| < r/4\}} \dfrac{((h-y_{2})^{2} - y_{1}^{2})y_{1}^{2}}{((h-y_{2})^{2} + y_{1}^{2})^{2}}dS(y)\\
		 & \leq C + \frac{C}{r}\displaystyle\int_{\partial B_{r}\cap\{|y| < r/4\}}dS(y) \leq C.
		\end{align*}
	Note that $ \displaystyle\int_{\partial B_{r}\cap\{|y| < r/4\}}k_{2} \dfrac{(h-y_{2})^{2} - y_{1}^{2}}{|x-y|^{4}}a_{1}y_{1}dS(y) = 0$, because the integrand is odd in $y_{1}$.\\
		
		Similarly, we can derive that $\partial_{12}I_{21}(x) \leq C$. We leave details to interested readers.

	\end{proof}

	\section{General case}
	
    In this section, we consider the general case, where initial data is
	\[
	\omega_{0}(x) = \sum\limits_{k=1}^{N}\theta_{k}\chi_{\Omega_{k}(0)}(x).
	\]
	
	By Yudovich theory (see \cite{Yudovich}, \cite{Majda} or \cite{Marchioro}), there exists a unique solution in the form of 
	\begin{equation}\label{patchsolution}
	\omega(x,t) := \sum\limits_{k=1}^{N}\theta_{k}\chi_{\Omega_{k}(t)}(x)
	\end{equation}
	with $\Omega_{k}(t) = \Phi_{t}(\Omega_{k}(0))$ for each $k$. Also note that $\Phi_{t}(x)$ is uniquely defined for any $x \in \mathbb{R}^{2}$, due to a time independent log-Lipschitz bound
	\begin{equation}\label{loglip}
	|u(x,t) - u(y,t)| \leq C\norm{\omega_{0}}_{L^{\infty}}|x-y|\log(1 + |x-y|^{-1}).
	\end{equation}

	By definition \ref{generalpatchdef}, to show that $\omega$ in (\ref{patchsolution}) is a $C^{1,\gamma}$ patch solution, we need to prove that $\{\partial\Omega_{k}(t)\}_{k=1}^{N}$ is a family of disjoint simple closed curves for each $t \geq 0$, and 
	\[
	\sup\limits_{t \in [0,T]}\max\limits_{k}\norm{\Omega_{k}(t)}_{C^{1,\gamma}} \, < \infty
	\]
	for each $T < \infty$.\\
	
	First note that (\ref{loglip}) yields
	\[
	\min\limits_{i\not = k}\text{dist}(\Omega_{i}(t), \Omega_{k}(t)) \geq \delta(t) > 0
	\]
	for all $t \geq 0$, where $\delta(t)$ decreases at most double exponentially in time. This will ensure that the effects of the patches on each other will be controlled. Now, it remains to prove that each $\partial\Omega_{k}(t)$ is a simple closed curve with $\parallel\partial\Omega_{k}(t)\parallel_{C^{1,\gamma}}$ uniformly bounded on bounded time interval.\\
	
	Let us decompose 
	\[
	u = \sum\limits_{i=1}^{N}u_{i}
	\]
	with each $u_{i}$ coming from the contribution of the patch $\Omega_{i}$ to $u$. If $i \not = k$, then we have
	\[
	\norm{\nabla^{n}u_{i}(\cdot,t)}_{L^{\infty}(\Omega_{k}(t))} \, \leq C(\omega_{0},n)\delta(t)^{-n-1}
	\]
	for all $n \geq 0$. This yields
	\[
	\norm{\nabla u_{i}(\cdot,t)}_{\dot{C}^{\gamma}(\Omega_{k}(t))} \, \leq C(\omega_{0})\delta(t)^{-3}.
	\]
	
	Analogously to Proposition \ref{gradient o fvelocity}, we also have the estimate by simple scaling,
	\begin{equation}\label{generalgradientv}
	\norm{\nabla u_{i}(\cdot,t)}_{L^{\infty}(\mathbb{R}^{2})} \leq C_{\gamma}|\theta_{i}|\Big(1+\log_{+}\frac{A_{\gamma}(t)}{A_{\inf}(t)}\Big).
	\end{equation}
	
	With all these in hand, let us prove Theorem \ref{generalpatch}.
	\begin{proof}[Proof of Theorem \ref{generalpatch}]
	We now consider $\varphi_{k}$ and $w_{k} := \nabla^{\perp}\varphi_{k}$ for each $\Omega_{k}$. We also add $\sup\limits_{k}$ in the definitions of $A_{\infty}$, $A_{\gamma}$ and add $\inf\limits_{k}$ in the definition of $A_{\inf}$. With $\Theta := \max\limits_{1 \leq k \leq N}|\theta_{k}|$, for each $k$ and $t > 0$, we have
	
	\begin{tabbing}
		$\qquad$ $\norm{(\nabla u)w_{k}}_{\dot{C}^{\gamma}(\Omega_{k})}$ \= $\leq$ $C_{\gamma}\Theta A_{\gamma}\Big(1 + \log_{+}\dfrac{A_{\gamma}}{A_{\inf}}\Big) + A_{\infty}$\\
		\> $\quad$ $+$ $\sum\limits_{i \not = k}\norm{\nabla u_{i}}_{L^{\infty}(\Omega_{k})}\norm{w_{k}}_{\dot{C}^{\gamma}(\Omega_{k})}$\\
		\> $\quad$ $+$ $\sum\limits_{i \not = k}\norm{\nabla u_{i}}_{\dot{C}^{\gamma}(\Omega_{k})}\norm{w_{k}}_{L^{\infty}(\Omega_{k})}$\\
		\> $\leq$ $C_{\gamma}N\Theta A_{\gamma}\Big(1 + \log_{+}\dfrac{A_{\gamma}}{A_{\inf}}\Big) + C(\omega_{0})N\delta(t)^{-3}A_{\infty}.$
		
	\end{tabbing}

	Then we have estimates,
	\[
	A_{\gamma}^{\prime}(t) \leq C_{\gamma}N\Theta A_{\gamma}(t)\Big(1 + \log_{+}\frac{A_{\gamma}(t)}{A_{\inf}(t)}\Big) + C(\omega_{0})N\delta(t)^{-3}A_{\infty}(t).
	\]
	Let $\widetilde{A}(t) := A_{\gamma}(t)A_{\inf}(t)^{-1} + A_{\infty}(t)$, then a simple computation yields that 
	\[
	\widetilde{A}^{\prime}(t) \leq C(\gamma,N,\omega_{0})\widetilde{A}(t)\big(\delta(t)^{-3} + \log_{+}\widetilde{A}(t)\big).
	\]
	
	Since $\delta(t)^{-3}$ increases at most double exponentially in time, it follows that $\widetilde{A}(t)$ increases at most triple exponentially. So $\norm{\partial\Omega_{k}(t)}_{C^{1,\gamma}}$ is uniformly bounded on bounded  time intervals, thus completing the proof.\\
    \end{proof}

	\section{One special case with double exponential upper bound}
	We consider a special case in $D := B_{1}(0)$ with initial data in the following form,
	\[
	\omega_{0}(x) = \omega_{1}(x,0) - \omega_{2}(x,0) = \chi_{\Omega_{1}(0)}(x) - \chi_{\Omega_{2}(0)}(x),
	\]
	where $\Omega_{1}(0)$ and $\Omega_{2}(0)$ are two single disjoint patches that are symmetric to each other with respect to the line $x_{1} = 0$. By uniqueness, we know the solution is still of the form
	\[
	\omega(x,t) = \omega_{1}(x,t) - \omega_{2}(x,t) = \chi_{\Omega_{1}(t)}(x) - \chi_{\Omega_{2}(t)}(x),
	\]
	where $\Omega_{1}(t)$ and $\Omega_{2}(t)$ are two symmetric single disjoint patches.\\

	Let us prove Theorem \ref{symmetriccase}.
	
	\begin{proof}[Proof of Theorem \ref{symmetriccase}]
		
		From now on, we will drop $t$ from $\Omega_{k}(t)$, since the estimate is time independent. We adopt $\varphi_{k}$ and $w_{k}$ for $k = 1,2$ from Section 3 and we also add $\sup\limits_{k}$ in the definitions of $A_{\infty}$, $A_{\gamma}$ and add $\inf\limits_{k}$ in the definition of $A_{\inf}$ as we did in Section 3. Note that $u = u_{1} + u_{2}$, and for $i = 1, 2$,
		\[
		u_{i} = v_{i} + \widetilde{v}_{i} = -\dfrac{1}{2\pi}\int_{\Omega_{i}}\dfrac{(x-y)^{\perp}}{|x-y|^{2}}dy + \dfrac{1}{2\pi}\int_{\widetilde{\Omega}_{i}}\dfrac{(x-y)^{\perp}}{|x-y|^{2}}\dfrac{1}{|y|^{4}}dy.
		\]
		
		From the proof in Section 3, our goal is to estimate $\norm{(\nabla u)w_{k}}_{\dot{C}^{\gamma}(\Omega_{k})}$ for $k = 1, 2$. Without loss of generality, if suffices to prove the case in which $k = 1$. As we discussed above, $\norm{(\nabla u)w_{1}}_{\dot{C}^{\gamma}(\Omega_{1})}$ can be decomposed to be sum of $\norm{(\nabla u_{1})w_{1}}_{\dot{C}^{\gamma}(\Omega_{1})}$ and $\norm{(\nabla u_{2})w_{1}}_{\dot{C}^{\gamma}(\Omega_{1})}$.
		
		For the first term $\norm{(\nabla u_{1})w_{1}}_{\dot{C}^{\gamma}(\Omega_{1})}$, $u_{1}$ and $w_{1}$ are both generated by the patch $\Omega_{1}$, it is exactly same to the single patch case in Section 2. Thus we have,
		\[
		\norm{(\nabla u_{1})w_{1}}_{\dot{C}^{\gamma}(\Omega_{1})} \leq C_{\gamma}A_{\gamma}\Big(1+\log_{+}\frac{A_{\gamma}}{A_{\inf}} \Big) + C_{\gamma}A_{\infty}.
		\]
		
		For the second term $\norm{(\nabla u_{2})w_{1}}_{\dot{C}^{\gamma}(\Omega_{1})}$, we decompose $(\nabla u_{2})w_{1}$ as 
		\[
		(\nabla u_{2})w_{1} = (\nabla v_{2})w_{1} + (\nabla\widetilde{v}_{2})w_{1}.
		\]
		
		First we claim that $\norm{(\nabla v_{2})w_{1}}_{\dot{C}^{\gamma}(\Omega_{1})}$ can be bounded by $CA_{\gamma}\Big(1+\log_{+}\dfrac{A_{\gamma}}{A_{\inf}}\Big)$. Indeed, $v_{2}$ can be regarded as the velocity field generated by the patch $\Omega_{2}$ in $\mathbb{R}^{2}$. $w_{1}$ is a divergence free vector field that is tangent to the boundary of $\Omega_{1}$, which is symmetric to $\Omega_{2}$ over $x_{1} = 0$. This case has been treated in \cite{KRYZ} from page 15 to the end of the Section 3. Note that the symmetry here is with respect to $x_{1} = 0$, while in \cite{KRYZ} the symmetry is with repect to $x_{2} = 0$.

		For the second term $\norm{(\nabla \widetilde{v}_{2})w_{1}}_{\dot{C}^{\gamma}(\Omega_{1})}$, as we treated single patch case in Section 2, we split $D$ as the union of $B_{10/11}(0))$ and $A(0;9/10,1)$. In $B_{10/11}(0))$, we know the velocity away from $\partial D$ is smooth enough, so $\norm{(\nabla\widetilde{v}_{2})w_{1}}_{\dot{C}^{\gamma}(\Omega_{1}\cap B_{10/11}(0))}$ is bounded by $C_{\gamma}(A_{\gamma} + A_{\infty})$. In $A(0;9/10,1)$, with $g(x) := \nabla\widetilde{v}_{2}w_{1}(x)$, we have
		
		\begin{equation}\label{Cgammaofg}
		\begin{split}
		\dfrac{|g(x)-g(y)|}{|x-y|^{\gamma}} & \leq |\nabla\widetilde{v}_{2}(y)|\norm{w_{1}}_{\dot{C}^{\gamma}(\Omega_{1})}\\
		& \quad + \dfrac{|\nabla\widetilde{v}_{2}(x) - \nabla\widetilde{v}_{2}(y)|}{|x-y|^{\gamma}}|w_{1}(x)|.
		\end{split}
		\end{equation}

		The first term in (\ref{Cgammaofg}) can be easily bounded by $C_{\gamma}A_{\gamma}\Big(1+\log_{+}\dfrac{A_{\gamma}}{A_{\inf}}\Big)$, because of (\ref{generalgradientv}) and definition of $A_{\gamma}$ and $w_{k}$.
		
        For the second term, we need a claim similar to Proposition \ref{T2estimate}. First we note that in (\ref{Cgammaofg}) $x, y$ are in $\Omega_{1}$, and $\widetilde{v}_{2}$ is generated by the patch $\Omega_{2}$. Similar to Section 2, we need to introduce some notations here. For any $x \in \Omega_{1}$, define $d(x) := \text{dist}(x,\widetilde{\Omega}_{2})$. Let $P_{x} \in \partial\widetilde{\Omega}_{2}$ be the point such that $d(x) = \text{dist}(x,P_{x})$ (if there are multiple such points, we pick any one of them). We define $\widetilde{w}_{2}(x)$ to be $\nabla^{\perp}\widetilde{\varphi}_{2}(x)$, where $\widetilde{\varphi}_{2}(x) = \varphi_{2}(\widetilde{x})$. By using a similar argument, we have, for $x, y \in \Omega_{1}\cap A(0;9/10,1)$ and $d(x) \leq d(y)$, 
        
        \begin{equation}\label{symmetriccaseinequality}
        \dfrac{|\nabla\widetilde{v}_{2}(x) - \nabla\widetilde{v}_{2}(y)|}{|x-y|^{\gamma}} \leq C_{\gamma}\Big(1 + \log_{+}\dfrac{A_{\gamma}}{A_{\inf}}\Big)\min\Big\{\dfrac{A_{\gamma}}{|\widetilde{\omega}_{2}(P_{x})|}, d(x)^{-\gamma}\Big\}
        \end{equation}

		The symmetry of $\Omega_{1}$ and $\Omega_{2}$ implies that we can choose
		\[
		\varphi_{1}(x) = \varphi_{2}(\bar{x}),
		\]
		where $\bar{x} = (-x_{1}, x_{2})$. Therefore, by definition $w_{i} = \nabla^{\perp}\varphi_{i}$, we have
		\[
		w_{2}(x) = -\overline{w_{1}(\bar{x})}.
		\]
		For any $x \in \Omega_{1}$,
		\[
		|w_{1}(x)| \leq |w_{1}(x) - w_{1}(\overline{\widetilde{P}_{x}})| + |w_{1}(\overline{\widetilde{P}_{x}})|.
		\]
		From above, we know $|w_{1}(\overline{\widetilde{P}_{x}})| = |w_{2}(\widetilde{P}_{x})|$. Since $x \in A(0; 9/10, 1)$, the symmetry over $\partial D$ is very close to it over a line. So we have $|x - \overline{\widetilde{P}_{x}}| \leq Cd(x)$. Thus we obtain,
		\[
		|w_{1}(x)| \leq CA_{\gamma}d^{\gamma} + C|\widetilde{w}_{2}(P_{x})|.
		\]
		Therefore,
		\[
		\norm{(\nabla\widetilde{v}_{2})w_{1}}_{\dot{C}^{\gamma}(\Omega_{1})} \leq C_{\gamma}A_{\gamma}\Big(1+\log_{+}\dfrac{A_{\gamma}}{A_{\inf}}\Big) + A_{\infty}.
		\]
		Thus, we get
		\[
		\norm{(\nabla u)w}_{\dot{C}^{\gamma}(\Omega_{1})} \leq C_{\gamma}A_{\gamma}\Big(1+\log_{+}\dfrac{A_{\gamma}}{A_{\inf}}\Big) + A_{\infty}.
		\]
		We conclude that $A_{\gamma}$, $A_{\inf}$ and $A_{\infty}$ grow at most double exponentially in time.
		
	\end{proof}

	\section{Example with double exponential growth}

	In this section, we are going to use the example constructed in\cite{KS} to show that the upper bound obtained by the previous section is actually sharp.\\
	
	First we introduce some notations that will be adopted throughout this section. With $\phi$ to be the usual angular variable, we have
	\begin{align*}
	D &:= B_{1}(e_{2}), \quad \textrm{with} \quad e_{2} = (0,1),\\
	D^{+} & := \{ (x_{1},x_{2}) \in D : x_{1} \geq 0\},\\
    D_{1}^{\gamma} & :=  \{(x_{1}, x_{x}) \in D^{+}|\frac{\pi}{2}-\gamma \geq \phi \geq 0\},\\
	D_{2}^{\gamma} & :=  \{(x_{1}, x_{2}) \in D^{+}|\frac{\pi}{2} \geq \phi \geq \gamma\},\\
	Q(x_{1},x_{2}) & :=  \{(y_{1}, y_{2})\in D^{+}|y_{1} \geq x_{1}, y_{2} \geq x_{2}\},\\
	\Omega(x_{1},x_{2},t) & :=  \dfrac{4}{\pi}\displaystyle\int_{Q(x_{1},x_{2})}\frac{y_{1}y_{2}}{|y|^{4}}\omega(y,t)dy.
	\end{align*}

	Consider two-dimensional Euler equation on $D$, let $\omega$ be vorticity. We will take smooth patch initial data $\omega_{0}$ ao that $\omega_{0}(x) \geq 0$ for $x_{1} > 0$ and $\omega_{0}$ is odd in $x_{1}$. It can be easily checked that such symmetry will be conserved in time $t$. Let us state the Key Lemma (see \cite{KS} for Lemma 3.1).
	
	\begin{lemma}\label{keylemma}
		Take any $\gamma$, $\pi/2 > \gamma > 0$. Then there exists $\delta > 0$ such that
		\[
		u_{1}(x) = -x_{1}\Omega(x_{1},x_{2}) + x_{1}B_{1}(x), \hspace{1mm}|B_{1}| \leq C(\gamma)\norm{\omega_{0}}_{L^{\infty}}, \hspace{2mm} \forall x \in D_{1}^{\gamma}, |x| \leq \delta
		\]
		\[
		u_{2}(x) = x_{2}\Omega(x_{1},x_{2}) + x_{2}B_{2}(x), \hspace{1mm}|B_{2}| \leq C(\gamma)\norm{\omega_{0}}_{L^{\infty}}, \hspace{2mm} \forall x \in D_{2}^{\gamma}, |x| \leq \delta
		\]
	\end{lemma}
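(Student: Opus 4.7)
The statement is quoted from Lemma~3.1 of \cite{KS}, so the plan is to reduce it to their argument. The starting point is the Biot--Savart law $u = \nabla^{\perp}(-\Delta_D)^{-1}\omega$ expressed via the Dirichlet Green's function of $D = B_1(e_2)$. Using the odd symmetry $\omega(-y_1,y_2,t) = -\omega(y_1,y_2,t)$, I fold the integration to $D^+$: the contributions from $y$ and $\bar y := (-y_1, y_2)$ combine into a single kernel $\widetilde K(x,y)$ acting on $\omega\chi_{D^+}$. By that same symmetry $u_1(0,x_2) = 0$, hence the folded kernel for $u_1$ must factor as $\widetilde K_1(x,y) = x_1 M_1(x,y)$ with $M_1$ smooth in $x$ away from $y$. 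The cone condition $\phi \leq \pi/2 - \gamma$ defining $D_1^\gamma$ makes $x_1$ comparable to $|x|$, which will let me absorb bounded errors into $x_1 B_1(x)$ without degeneracy. The analogous factorization $\widetilde K_2(x,y) = x_2 M_2(x,y)$ on $D_2^\gamma$ uses that $u$ is tangent to $\partial D$, which is itself tangent to $\{x_2 = 0\}$ at the origin.

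Next, I would extract the leading order of $M_i(x,y)$ as $x \to 0$ by splitting the integration into the main region $y \in Q(x_1, x_2)$ and the remainder $y \in D^+ \setminus Q(x_1, x_2)$. On $Q(x_1,x_2)$ the source $y$ is sufficiently far from $x$ that I can replace the Green's function of $B_1(e_2)$ by the half-plane Green's function for $\{x_2 > 0\}$, up to a smooth correction arising from the finite curvature of $\partial D$ at $0$. Expanding the resulting half-plane kernel in $x$ and keeping the linear-in-$x_1$ term produces exactly the coefficient $\frac{4}{\pi}\frac{y_1 y_2}{|y|^4}$ that defines $\Omega(x_1,x_2)$, while the curvature correction together with the integral over $D^+ \setminus Q(x_1,x_2)$ give bounded kernels when integrated against $\norm{\omega_0}_{L^\infty}$, contributing to $B_1$ and $B_2$.

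The main obstacle is the short-range strip $|y-x| \lesssim |x|$ where the kernel is singular. Here the reflection trick provides the needed cancellation: the difference $K(x,y) - K(x,\bar y)$ carries an extra factor of $y_1$ from folding, which combined with the $|x-y|^{-1}$ singularity of $\nabla_x G$ yields an integrable contribution of size $C(\gamma)|x|\norm{\omega_0}_{L^\infty}$, absorbed into $x_1 B_1(x)$ using $x_1 \simeq |x|$. This is the one place where the cone hypothesis is essential, and it is the same mechanism as in \cite{KS}. Since the Green's function of $B_1(e_2)$ differs from that of the half-plane only by a smooth correction near the origin, each step carries over from their proof with no structural changes, assembling into the claimed decomposition with $|B_i| \leq C(\gamma)\norm{\omega_0}_{L^\infty}$.
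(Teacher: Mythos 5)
The paper does not prove this lemma at all: it simply cites Lemma~3.1 of \cite{KS} and adds the one-line remark that the argument there, written for smooth $\omega$, carries over to patch data ``without any effort'' because it only uses $\norm{\omega}_{L^\infty}$. So there is no in-paper proof to compare against; the relevant question is whether your sketch of the \cite{KS} argument is accurate, and whether you noticed that the only new content here is the smooth-to-patch extension. You did not address the latter point, which is a real omission given that it is the entirety of what the paper says about this lemma.

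Your sketch of \cite{KS} is broadly in the right spirit (odd-symmetry folding, the role of the cone $D_1^\gamma$ in making $x_1 \simeq |x|$, reflection cancellation near the singularity, $L^\infty$ bounds controlling the error), but one step as you state it would fail. You write that on $Q(x_1,x_2)$ the source $y$ is ``sufficiently far from $x$'' so that the Green's function can be replaced by a far-field/half-plane expansion. This is false: $x$ is the corner of the quadrant $Q(x_1,x_2)$, so $y\in Q(x_1,x_2)$ can be arbitrarily close to $x$, and the Biot--Savart kernel there is fully singular. The main term $-x_1\,\Omega(x_1,x_2)$ is not extracted by a far-field expansion; in \cite{KS} the cancellation from the odd reflection $y\mapsto(-y_1,y_2)$ and careful kernel algebra are needed precisely in the region where $y$ is close to $x$, and the factor $x_1$ in front of $\Omega$ comes from that cancellation together with the cone condition, not from $Q$ being away from the singularity. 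Also note that the remainder integral over $D^+\setminus Q(x_1,x_2)$ by itself is not uniformly bounded against $\norm{\omega_0}_{L^\infty}$ — the logarithmically divergent part is exactly what is captured by $\Omega(x_1,x_2)$, and $B_1$ is only bounded after that cancellation is performed. You should restate this step to reflect the actual mechanism in \cite{KS}, and add a sentence observing that every estimate there is linear in $\omega$ and controlled by $\norm{\omega}_{L^\infty}$ alone, which is what makes the extension to $\omega_0 = \chi_{P(0)} - \chi_{\bar P(0)}$ immediate.
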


	Note that, in \cite{KS} Lemma \ref{keylemma} applies only to smooth $\omega$, but at the same time the argument can extend to patches without any effort. Following the proof in \cite{KS}, exponential growth of curvature can be achieved easily. Indeed, take initial data $\omega_{0}(x)$ which is equal to $1$ everywhere in $D^{+}$ except on a thin strip of width equal to $\delta/2$ ($\delta$ is chosen from Lemma \ref{keylemma}, for some small $\gamma < \pi/10$) near the vertical axis $x_{1} = 0$, where $\omega_{0}(x) = 0$. Then we round the corner of this single patch to make the boundary smooth and also guarantee that $\omega_{0} = 1$ everywhere in $D^{+}$ except on a thin strip of width equal to $\delta$ near the vertical axis $x_{1} = 0$. Denote the patch in $D^{+}$ at the initial time by $P(0)$, so $\omega_{0}(x) = \chi_{P(0)}(x) - \chi_{\bar{P}(x)}(x)$, where $\bar{P} := \big\{(x_{1}, x_{2}): (-x_{1},x_{2}) \in P\big\} $. By odd symmetry, two single patches $P$ and $\bar{P}$ will stay in the two half disks respectively for all time $t$. Due to incompressibility, the measure of the set where $\omega(x,t) = 0$ does not exceed $4\delta$. In this case, for every $x \in D^{+}$ with $|x| < \delta$, we can derive the following estimate for $\Omega(x_{1}, x_{2})$,
	\[
	\Omega(x_{1},x_{2},t) \geq \int_{2\delta}^{2}\int_{\pi/6}^{\pi/3}\omega(r,\phi)\frac{\sin 2\phi}{2r}d\phi dr \geq \frac{\sqrt{3}}{4}\int_{2\delta}^{2}\int_{\pi/6}^{\pi/3}\frac{\omega(r,\phi)}{r}.
	\]
	The value of the integral on the right hand side is minimal when the area where $\omega(r,\phi) = 0$ is seated around small values of the radial variable. Since this area does not exceed $4\delta$, we have
	\begin{equation}\label{logdelta}
	  \frac{4}{\pi}\Omega(x_{1},x_{2},t) \geq c_{1}\int_{c_{2}\sqrt{\delta}}^{1}\int_{\pi/6}^{\pi/3}\frac{1}{r}d\phi dr \geq C_{1}\log \delta^{-1},
	\end{equation}
	where $c_{1}$, $c_{2}$ and $C_{1}$ are positive universal constants.\\
	
	By Lemma \ref{keylemma}, we have that for all $|x| \leq \delta$, $x \in D^{+}$ that lie on the patch boundary, with universal constants $C_{1}, C_{2} < \infty$)
	\[
	 u_{1}(x, t) \leq -x_{1}(C_{1}\log\delta^{-1} - C_{2}).
	\]
	We can choose $\delta > 0$ sufficiently small so that $u_{1}(x,t) \leq -x_{1}$ for all time if $|x| < \delta$. Due to the boundary condition on $u$, the trajectories that start at the boundary will stay on the boundary for all time. Taking the trajectory starting at the leftmost point $x_{0}$, such that $x_{0} \in \partial D\cap\partial P(0)$, with $|x| < \delta$, we get 
	\begin{equation}\label{exponentialgrowth}
	\Phi_{t}^{1}(x_{0}) \leq x_{0}^{1}e^{-t},
	\end{equation}
	where $\Phi_{t}(x)$ is defined in (\ref{particletrajectory}) and superscript denotes the corresponding coordinate of the vector.	Note that, for a curve, one can use the distance for it to change direction by $\pi/2$ to characterize the curvature. In this case, $\Phi_{t}^{1}(x_{0})$ is being pushed towards the origin from the right along the boundary $\partial D$, and by odd symmetry, the axis $x_{1} = 0$ is a barrier that patch $P$ can not pass. So $\partial P$ has to form a round corner at $\Phi_{t}^{1}(x_{0})$ when approaching the origin. Exponential growth of curvature therefore follows from (\ref{exponentialgrowth}).\\

	To achieve double exponential growth on curvature, by discussion above, we need an example where we can track a point on the patch boundary that approaches the origin at double exponential speed.\\
	
	Let us prove Theorem \ref{sharpexample} by using the example constructed in \cite{KS}, the proof is similar to \cite{KS}.

	\begin{figure}[h]
		\includegraphics[scale=0.5]{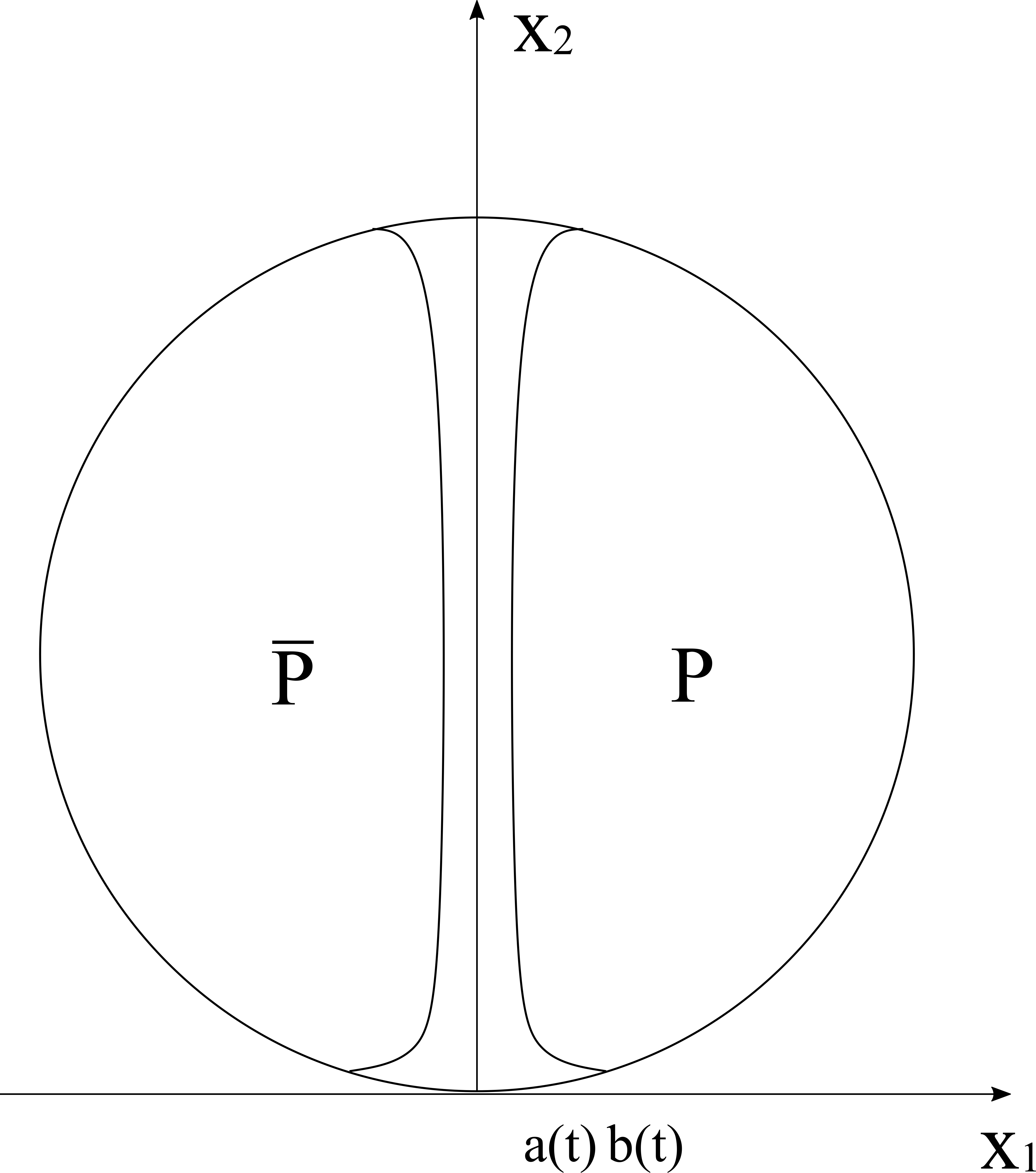}
		\caption{}
		\label{figure5}
    \end{figure}

	\begin{proof}[Proof of Theorem \ref{sharpexample}]
	We first fix some small $\gamma > 0$ (take $\pi/10$ for example). We will first take the smooth patch initial data $\omega_{0}$ as the one constructed in the previous exponential growth example, with $\omega_{0} = 1$ everywhere in $D^{+}$ except on a thin strip of width equal to $\delta$ near the vertical axis $x_{1} = 0$ (we will make some modifications later to achieve double exponential growth). We choose $\delta > 0$ small enough such that Lemma \ref{keylemma} applies and that $C_{1}\log\delta^{-1} > 100C(\gamma)$ with $C_{1}$ from (\ref{logdelta}) and $C(\gamma)$ from Lemma \ref{keylemma}.\\
	
	For $0 < x^{\prime}_{1}, x^{\prime\prime}_{1} < 1$, we denote
	\[
	 \mathcal{O}(x^{\prime}_{1},x^{\prime\prime}_{1}) = \big\{(x_{1}, x_{2})\in D^{+}|x^{\prime}_{1} \leq x \leq x^{\prime\prime}_{1}, x_{2} < x_{1}\big\}.
	\]
	For $0 < x_{1} < 1$, we let
	\begin{equation}\label{fastslowspeed}
    \begin{split}
	\overline{u}(x_{1},t) &= \max\limits_{(x_{1},x_{2}) \in D^{+}x_{2} < x_{1}}u_{1}(x_{1},x_{2},t), \\
	\underline{u}(x_{1},t) &= \min\limits_{(x_{1},x_{2}) \in D^{+}x_{2} < x_{1}}u_{1}(x_{1},x_{2},t),
	\end{split}
	\end{equation}

	and define $a(t)$, $b(t)$ by
	\begin{align*}
	a^{\prime}(t) &= \overline{u}(a(t),t),  \quad a(0) = \epsilon^{10},\\
	b^{\prime}(t) &= \underline{u}(b(t),t), \quad b(0) = \epsilon.	
	\end{align*}
	
	Like in \cite{KS}, by choosing $\epsilon < \delta/2$ small enough such that $-\log\epsilon$ is larger than some universal constant that appears in \cite{KS}, then
	\[
	  a(t) \leq \epsilon^{8\exp(t/2\pi)}.
	\]

	 From the discussion above, we can finally pick our initial data $\omega_{0}(x)$ to be $1$ everywhere in $D^{+}$ except on a thin strip of width equal to $\epsilon^{10}/2$ near the vertical axis $x_{1} = 0$, and round the corner of the patch to make its boundary smooth. The double exponential growth of curvature follows as in the argument for exponential growth above. See Figure \ref{figure5}.

	\end{proof}

    \section*{Acknowledgement}
	I would like to thank Alexander Kiselev and Yao Yao for their valuable advices. I also would like to acknowledge  partial support of the NSF-DMS grant 1412023.

\end{document}